\providecommand{\pgfsyspdfmark}[3]{}
\newtheorem{thm}{Theorem}[section]
\newtheorem{prop}[thm]{Proposition}
\newtheorem{lem}[thm]{Lemma}
\newtheorem{cor}[thm]{Corollary}
\newtheorem*{thm*}{Theorem}
\newtheorem*{cor*}{Corollary}
\newtheorem*{prop*}{Proposition}
\theoremstyle{definition}
\theoremstyle{remark}
\newtheorem{remark}[thm]{Remark}
\newtheorem{example}[thm]{Example}
\numberwithin{equation}{section}
\newcommand{\GL}{\mathrm{GL}}
\newcommand{\tr}{\operatorname{tr}}
\renewcommand{\Re}{\mathrm{Re}}
\newcommand{\cA}{{\mathcal A}}
\newcommand{\cB}{{\mathcal B}}
\newcommand{\cF}{{\mathcal F}}
\newcommand{\cH}{{\mathcal H}}
\newcommand{\cK}{{\mathcal K}}
\newcommand{\cO}{{\mathcal O}}
\newcommand{\cS}{{\mathcal S}}
\newcommand{\fb}{{\mathfrak b}}
\newcommand{\fp}{{\mathfrak p}}
\newcommand{\fr}{{\mathfrak r}}
\newcommand{\A}{{\mathbb A}}
\newcommand{\B}{{\mathbb B}}
\newcommand{\C}{{\mathbb C}}
\newcommand{\N}{{\mathbb N}}
\newcommand{\F}{{\mathbb F}}
\newcommand{\D}{{\mathbb D}}
\newcommand{\T}{{\mathbb T}}
\newcommand{\bH}{\mathbb{H}}
\newcommand{\be}{\begin{equation}}
\newcommand{\ee}{\end{equation}}
\def\ba{\begin{eqnarray*}}
\def\ea{\end{eqnarray*}}
\newcommand{\bi}{\begin{itemize}}
\newcommand{\ei}{\end{itemize}}
\newcommand{\bn}{\begin{enumerate}}
\newcommand{\en}{\end{enumerate}}
\newcommand{\bbm}{\begin{bmatrix}}
\newcommand{\ebm}{\end{bmatrix}}
\newcommand{\bpm}{\begin{pmatrix}}
\newcommand{\epm}{\end{pmatrix}}
\newcommand{\bsm}{\left ( \begin{smallmatrix}}
\newcommand{\esm}{\end{smallmatrix} \right) }
\newcommand{\mr}{\ensuremath{\mathrm}}
\newcommand{\scr}{\ensuremath{\mathscr}}
\newcommand{\mf}{\ensuremath{\mathfrak}}
\newcommand{\ov}{\ensuremath{\overline}}
\newcommand{\sm}{\ensuremath{\setminus}}
\newcommand{\wt}{\ensuremath{\widetilde}}
\newcommand{\la}{\ensuremath{\lambda }}
\newcommand{\om}{\ensuremath{\omega}}
\newcommand{\cuntz}{\ensuremath{\cO_d}}
\def\C{\mathbb{C}}
\def\D{\mathbb{D}}
\def\N{\mathbb{N}}
\def\B{\mathbb{B}}
\def\fr{\mathfrak{r}}
\def\frt{\mathfrak{r} ^\mathrm{t}}
\def\fb{\mathfrak{b}}
\def\fbt{\mathfrak{b} ^{\mathrm{t}}}
\def\bH{\mathbb{H}}
\def\A{\mathbb{A} _d}
\def\fp{\mathbb{C} \langle \mathbb{\mathfrak{z}} \rangle }
\def\mrt{\mathrm{t}}
\def\posncm{(\mathscr{A} _d ^\dagger) _+}
\def\hardy{\mathbb{H} ^2 _d}
\def\mult{\mathbb{H} ^\infty _d}
\newcommand{\ip}[2]{\ensuremath{\langle {#1} , {#2} \rangle}}
\def\nbdom{\mr{Dom} \, }
\def\fskew{\C \ \mathclap{\, <}{\left( \right.} \,  \mf{z} \, \mathclap{ \, \, \, \, >}{\left. \right)} }
\def\ratfps{\C _0 \ \mathclap{\, <}{\left( \right.} \,  \mf{z} \, \mathclap{ \, \, \, \, >}{\left. \right)} }
\title{Rational Cuntz states peak on the free disk algebra}
\author[1]{Robert T.W. Martin\thanks{Supported by NSERC grant 2020-05683}}
\affil[1]{\footnotesize University of Manitoba}
\author[2]{Eli Shamovich}
\affil[2]{ \footnotesize Ben-Gurion University of the Negev}
\date{}
\begin{document}
\maketitle

\begin{abstract}
We apply realization theory of non-commutative rational multipliers of the Fock space, or \emph{free Hardy space} of square--summable power series in several non-commuting variables to the convex analysis of states on the Cuntz algebra. We show, in particular, that a large class of Cuntz states which arise as the `non-commutative Clark measures' of isometric NC rational multipliers are peak states for Popescu's free disk algebra in the sense of Clou\^atre and Thompson.
\end{abstract}

\section{Introduction}

This paper applies non-commutative (NC) analysis to a question in non-commutative convexity. Our main result constructs states on the Cuntz algebra, $\cuntz$, which peak at non-commutative rational inner functions in Popescu's \emph{free disk algebra}, $\A$.  We also provide a novel characterization of unital quantum channels in terms of NC rational inner functions.

Non-commutative convexity first appeared in the seminal work of Arveson \cite{Arv-subalg}. In \cite{Arv-subalg}, Arveson extended classical Choquet theory to a non-commutative, operator--algebraic setting. Classical Choquet theory studies the extreme boundary of compact convex sets and representing measures. Namely, let $K$ be a compact convex set; a probability measure $\mu$ on $K$ represents a point $x \in K$ if the restriction of the corresponding state from $C(K)$, the continuous functions on $K$, to the space of continuous affine functions on $K$, is the functional of evaluation at $x$. Bauer characterized the extreme points of $K$ as those that admit only one representing measure (necessarily $\delta_x$). This characterization lends itself to an extension to a more general setting of unital subspaces $1 \in M \subset C(X)$, where $X$ is a compact Hausdorff space. The Choquet boundary of such a subspace is the collection of all points $x \in X$, such that $\delta_x|_M$ admits a unique extension to $C(X)$. One way to obtain points in the Choquet boundary is to find peak points. A point $x \in X$ is an $M-$peak point, if there exists an $f \in M$, such that $|f(x)| = \|f\|$ and $|f(y)| < \| f \|$ for any $y \neq x$. In particular, if $1 \in A \subset C(X)$ is a uniform algebra, and $X$ is metrizable, then an important result of Bishop states that the Choquet boundary of $A$ is precisely the set of peak points of $A$.

The field of non-commutative convexity has expanded quickly with contributions from Wittstock, Effros and his collaborators, and many others. The reader is referred to the monograph of Davidson and Kennedy \cite{DavidsonKennedy-big} and the references therein for further details on non-commutative convexity theory. Of particular interest is the non-commutative Choquet theory first introduced and developed by Arveson \cite{Arv-subalg, Arv-choq1, Arveson-choq2,Arveson-choq3}. Suppose $B$ is a unital $C^*$-algebra and $1 \in A \subset B$ is an operator algebra. In this case, we say that an irreducible representation $\pi \colon B \to B(\cH)$ is a boundary representation if $\pi|_A$ has a unique extension to $B$. The image of the direct sum of all boundary representations gives the \emph{$C^*-$envelope}, $C^* _{min} (A)$ of $A$. This is a $C^*-$algebra in which $A$ embeds, completely isometrically, and it is universal and minimal in the sense that if $A$ embeds completely isometrically into any $C^*-$algebra, $B$, then there is a $*-$homomorphism from $B$ onto $C^* _{min} (A)$ which intertwines the embeddings. Existence and construction of the $C^*-$envelope via boundary representations was a long-unsolved problem in operator algebra theory until it was resolved in full generality by Davidson and Kennedy \cite{DavidsonKennedy,Arv-choq1,Arv-subalg,Hamana,DMc-boundary}. Arveson also introduced the concept of a peaking representation in \cite{Arveson-choq3}. His ideas were further extended by Clou\^{a}tre \cite{Clouatre-peaking}, Clou\^{a}tre and Thompson \cite{ClouThom-fin_dim,ClouThom-min_bound}, and Davidson and Passer \cite{DavidsonPasser}. In particular, Clou\^{a}tre and Thompson propose to study `peaking states'. Precise definitions and details on peaking states can be found in Subsection \ref{subsec:peak}. Clou\^{a}tre and Thompson \cite{ClouThom-min_bound} suggest that studying peaking phenomena on the Cuntz algebra will be interesting. We will provide a family of examples of peaking states and corresponding representations in this setting.

In more detail, we will study states on the Cuntz algebra $\cuntz$, which peak at elements of Popescu's free or non-commutative disk algebra, $\A$. Here, $\scr{E} _d := C^* \{ I , L_1 , \cdots , L_d \}$, is the \emph{Cuntz--Toeplitz algebra}, the unital $C^*-$algebra generated by the left creation operators, $L_k$, on the full Fock space, $\hardy$. Here, $\hardy$ can be defined as the Hilbert space of complex square--summable power series in several NC variables, equipped with the $\ell ^2-$inner product of the complex power series coefficients. In this viewpoint the $L_k = M ^L _{\mf{z} _k}$ act as isometric left multiplications by the $d$ independent formal NC variables, $\mf{z} = (\mf{z} _1 , \cdots , \mf{z} _d)$.  This algebra contains the compact operators, $\scr{K} (\hardy )$, on $\hardy$ and the Cuntz algebra is then defined as the quotient $C^*-$algebra,  $\cuntz := \scr{E} _d / \scr{K} ( \hardy )$. Popescu's free disk algebra $\A := \mr{Alg} \{ I , L_1, \cdots L_d \}$ is the unital norm-closed algebra generated by the left creation operators. Moreover, $\A$ can be identified, completely isometrically with $\mr{Alg} \{ I, S_1, \cdots , S_d \}$, where the $S_j$ denote the generators of the Cuntz algebra.

Our primary tools are free analysis and non-commutative function theory. These are rapidly growing fields in modern analysis. Free or non-commutative analysis was initially motivated by the study of analytic functional calculus of several commuting and non-commuting operators, as pioneered by Taylor \cite{Taylor-frame,Taylor-ncfunc}, Voiculescu's free probability theory \cite{Voiculescu-quest1,Voiculescu-quest2}, and Takesaki's extension of Gelfand duality to arbitrary, non-commutative $C^*-$algebras in \cite{Takesaki}. Popescu \cite{Pop-dil,Pop-vN,Pop-freeholo,Pop-freeholo2} studied non-commutative functions to extend the classical Sz. Nagy--Foias theory of dilations and von Neumann's inequality to the multivariable setting. He introduced the full Fock space as an analog of the classical Hardy Hilbert space of analytic function in the complex unit disk, $\D$. We provide the necessary definitions and background theory in Subsection \ref{subsec:hardy}.

A particularly well-studied class of NC functions is the set of all non-commutative rational functions. These functions arise naturally in many different branches of pure and applied mathematics, such as the theory of localizations and quasideterminants in non-commutative rings \cite{Amitsur,Cohn,GGRW}, the theory of formal languages \cite{BR,HW}, and systems theory \cite{BGM1,BKV}. This paper will focus on non-commutative rational functions that are elements of the weak operator topology (WOT)-closed algebra generated by the left creation operators on the full Fock space. Such functions were extensively studied in \cite{JMS-NCrat,JMS-ncratClark}. In this paper, there is a critical interplay between non-commutative rational inner functions, \emph{i.e.} NC rational functions which define isometric (inner) multipliers on the Fock space, states on the Cuntz algebra which peak on the free disk algebra, and finite-dimensional row coisometries. A row contraction is a contractive linear map from several copies of a Hilbert space into one copy. The necessary background and details are in Subsection \ref{subsec:ncrat}.

Given an irreducible and finite-dimensional row coisometry, $T = (T_1 , \cdots , T_d )$, and a unit vector $x$, we can define a linear functional on $\A$ via $\mu(f) = \langle x, f(T) x \rangle$. One can apply a Gelfand--Naimark--Segal construction to $(\mu , \A )$, and this yields a GNS-Hilbert space, $\hardy (\mu )$ and a $*-$representation of the Cuntz--Toeplitz $C^*-$algebra, $\pi _\mu$. Since $T$ is a row-coisometry, one can show that $\Pi _\mu := \pi _\mu (L)$ is a Cuntz, \emph{i.e.} surjective row isometry, so that $\mu$ admits a unique extension to a state on $\cO_d$, $\hat{\mu}$ by \cite[Proposition 5.11]{JMT-ncFnM}. This state, $\hat{\mu}$, is a finitely--correlated state, as introduced by Bratteli and J\o rgensen \cite{BraJorg}. Our main result is Theorem \ref{thm:main}, which states that every such finitely--correlated state is an $\A$-peak state. In particular, by results of Clou\^atre, this implies that every such state is an exposed extreme point of the state space of the non-commutative or free disk operator system. The proof of the theorem constructs a non-commutative rational inner, such that the state peaks at it. However, this inner is constructed from $(T^t _1 , \cdots, T^\mrt _d )$, the coordinate-wise transpose of $T$. This duality leads us to Theorem \ref{thm:qc}, which shows that a non-commutative rational inner $\fb$ arises from a quantum channel if and only if the `transpose' $\fb^t$, obtained by reversing the order of all products in monomials in the power series of $\fb$, is an inner as well. En route to these results, we obtain some results on spectra of non-commutative rational functions regular at the origin, which refine and extend our previous results in \cite{JMS-ncratClark}.

\section{Background} \label{sec:background}

\subsection{Non-commutative Hardy space and its multipliers} \label{subsec:hardy}

Given $d \in \N$, the \emph{full Fock space} is the Hilbert space direct sum $\bH^2_d = \oplus_{n=0}^{\infty} \left(\C^d\right)^{\otimes n}$, with the usual convention of $\left(\C^d\right)^{\otimes 0} \cong \C$. We will call $\hardy$ the \emph{free} or \emph{non-commutative} (NC) \emph{Hardy space} as it has many properties in common with the classical Hardy space, $H^2$, of square--summable power series in the complex unit disk. In particular, the elements of $\bH^2_d$ can be viewed as NC functions on the NC unit ball
\[
\B ^d _{\N} = \bigsqcup_{n=1}^{\infty} \B _n ^d, \text{ where } \B ^d _n  = \left\{ X \in \scr{B} (\C^n \otimes \C^d ,\C^n) \mid X X^* < I\right\}.
\]
The interpretation of $\hardy$ as a space of non-commutative functions was first considered by Popescu in \cite{Pop-freeholo}. A natural way to see $\bH^2_d$ as a space of functions is by noting that $\bH^2_d$ is the completion of the free algebra, $\C\langle \mathfrak{z}_1,\ldots,\mathfrak{z}_d \rangle = \fp$, of NC or free polynomials, with respect to the inner product that makes the monomials orthonormal. The NC monomials correspond to words in the alphabet $\{1,\ldots,d\}$. Given a word $\alpha = i_1 \cdots i_n$, $i_j \in \{ 1, \cdots , d \}$, we write $\mathfrak{z}^{\alpha} = \mathfrak{z}_{i_1} \cdots \mathfrak{z}_{ i_n}$. If the word is empty, we define $\mf{z} ^\emptyset := 1$. We will also denote the length of $\alpha = i_1 \cdots i_n$ by $|\alpha | =n$. For a $d$-tuple of operators $T = (T_1,\ldots,T_d)$, we set $T^{\alpha} = T_{i_1} \cdots T_{i_n}$ and $T^\emptyset =I$. Note that for any free polynomial, $p \in \fp$, we can evaluate $p$ on any $d$-tuple of matrices $Z = (Z_1, \cdots , Z_d ) \in \C ^{n\times n} \otimes \C ^{1\times d} =: \C ^d _n$ and obtain a function with the following properties: $p(Z) = p (Z_1, \cdots, Z_d)$  
\begin{itemize}
    \item[(i)] is \textbf{graded}: for every $X \in \C ^d _n$, $p(X) \in \C ^{n \times n}$,

    \item[(ii)] \textbf{respects direct sums}: for every $X \in \C ^d _n$ and $Y \in \C ^d _m$, we have $$p(X \oplus Y) = p\left( \begin{pmatrix} X & 0 \\ 0 & Y \end{pmatrix} \right) = \begin{pmatrix} p(X) & 0 \\ 0 & p(Y) \end{pmatrix} = p(X) \oplus p(Y),$$

    \item[(iii)] \textbf{respects similarities}: for every $X \in \C ^d _n $ and $S \in \GL_n$, we have $$p(S^{-1} X S) = p(S^{-1} X_1 S, \ldots, S^{-1} X_d S) = S^{-1} p(X) S.$$
\end{itemize}
The elements of $\bH^2_d$, thus, can be viewed as power series in non-commuting variables. The power series converge uniformly and absolutely on all tuples of matrices of norm $\leq r$, for every $0 < r < 1$. The properties above hold, except that the third property needs to be modified to hold only if $S^{-1} X S \in \B_\N ^d$. Therefore, we obtain a space of NC functions on $\B_\N ^d$.

In direct analogy with classical Hardy space theory, (left) multiplication by any of the $d$ independent variables define isometries, $L_j = M^L _{\mf{z} _j}$ on $\hardy$ with pairwise orthogonal ranges. The `row' operator, $$L = (L_1,\ldots,L_d) \colon \bH^2_d \otimes \C^d \to \bH^2_d,$$ is then a \emph{row isometry}, \emph{i.e.} an isometry from several copies of a Hilbert space into one copy. The operators $L_j$ are called the left creation operators or the left free shifts. The WOT-closed algebra $\bH^{\infty}_d$, the \emph{free Hardy algebra}, generated by the $L_j$ was extensively studied by Popescu \cite{Pop-freeholo,Pop-freeholo2,Pop-vN} and Davidson--Pitts \cite{DP-inv,DP-alg,DP-NevPick}. The free Hardy algebra is completely isometrically isomorphic to the algebra of all uniformly bounded NC functions on $\B_{\N} ^d$ with the uniform norm.
\[
\|f\| = \sup_{X \in \B_{\N} ^d}\|f(X)\|.
\]
The space $\bH^2_d$ is an NC reproducing kernel Hilbert space (RKHS) in the sense of Ball, Marx, and Vinnikov \cite{BMV}. The free Hardy algebra can then be identified as the algebra of left multipliers of this NC-RKHS.

Popescu and Davidson--Pitts have generalized the classical inner-outer factorization of bounded analytic functions on the disk to $\bH^{\infty}_d$. Here, as in classical $H^2$ theory, an inner NC function in $\bH^{\infty}_d$ defines an isometric left multiplier on $\bH^2_d$ and an outer NC function is one that defines a left multiplier with a dense range.

Recall that the free disk algebra, $\A$, is the unital norm-closed algebra generated by the left creation operators on the full Fock space. We assume throughout that $d \geq 2$. This algebra is a very close NC analogue of the classical disk algebra, $\mathbb{A}_1 = A(\D)$. The Cuntz algebra is the universal $C^*$-algebra of a surjective row isometry. That is, if  $S = (S_1,\ldots,S_d)$ denotes the row isometry of generators of the Cuntz algebra, then $\sum_{j=1}^d S_j S_j^* = I$. The quotient map from the Cuntz--Toeplitz algebra, $\scr{E} _d$ onto $\cuntz$ restricts to a complete isometry on $\A$, and since the Cuntz algebra is simple, it is the $C^*$-envelope of $\A$. Classically, $C(\T)$ is the $C^*$-envelope of $A(\D)$. Moreover, $C(\T) = \overline{A(\D) + A(\D)^*}$. In the NC setting, $\scr{A} _d := ( \A + \A ^* ) ^{-\| \cdot \|}$ is an operator system that embeds, completely isometrically, into $\cuntz$, but does not coincide with the Cuntz algebra. In particular, $\scr{A} _d$ is not an algebra or a $C^*-$algebra. Namely, by \cite[Theorem 3.1]{Pop-ncdisk}, if $S_1, \cdots, S_d$ denote the generators of $\cuntz$, then the free disk algebra $\A := \mr{Alg} \{ I , L_1 , \cdots , L_d \} ^{- \| \cdot \|}$ and $\A (S) := \mr{Alg} \{ I , S_1 , \cdots , S_d \} ^{- \| \cdot \|}$ are completely isometrically isomorphic. Moreover, by \cite[Proposition 3.5]{Paulsen-cbmaps}, the \emph{free disk system}, $\scr{A} _d := ( \A + \A ^* ) ^{-\| \cdot \|}$ and the operator system $\scr{A} _d (S) = ( \A (S) + \A (S) ^* ) ^{-\| \cdot \|}$ are then completely isometrically isomorphic. For the remainder of the paper, we identify $\A$ with $\A (S)$ and $\scr{A} _d$ with $\scr{A} _d (S)$ so that the free disk algebra and the free disk system are viewed as subspaces of the Cuntz algebra, $\cuntz$. An important property of $\A$ is that it is semi-Dirichlet, namely that $\A^* \A \subset \scr{A}_d$. This property enables one to perform a Gelfand--Naimark--Segal (GNS)-type construction directly from positive linear functionals on $\scr{A}_d$. Jury and the first author \cite{JM-freeAC} have developed an NC extension of the classical Alexandrov--Clark measure theory. In this NC theory, the positive linear functionals on the free disk system, denoted by $\posncm$, play the role of positive measures on the unit circle. There is, in particular, a one-to-one correspondence between states on $\scr{A}_d$ and contractive functions in $\bH^{\infty}_d$ that vanish at $0$. We will be primarily interested in the positive \emph{finitely--correlated states} or `NC measures' which arise as the `NC Clark measures' of NC rational multipliers of the Fock space, and we will introduce these in more detail in the following section.

\subsection{Non-commutative rational functions} \label{subsec:ncrat}

The theory of non-commutative rational functions has been developed independently in pure and applied disciplines ranging from pure algebra to computational and systems theory. In particular, the algebra of all NC rational functions is the \emph{free skew field} as constructed by Amitsur \cite{Amitsur} and Cohn \cite{Cohn} and is denoted by $\fskew$. (In NC algebra, Amitsur and Cohn proved that this `free skew field' is the universal `field of fractions' of the free algebra, $\fp$, of free or non-commutative complex polynomials in the $d$ NC variables, $\mf{z} = ( \mf{z} _1, \cdots , \mf{z} _d )$.) The domain of an NC rational function is roughly the largest collection of $d$-tuples of matrices to which our NC rational function can be continued. We will denote the domain of $\fr$ by $\nbdom \fr$. This paper will focus on NC rational functions that are defined and bounded on $\B ^d _{\N}$. This assumption simplifies much of the theory. The interested reader should consult \cite{KVV-ncrat,KVV-ncratdiff,Volcic} and the references therein for more detail on the theory of NC rational functions.  

Since $0 \in \B ^d _\N$, we will assume that our NC rational functions always have $0$ in their domains, and we will denote the algebra of all such NC rational functions by $\ratfps$.  This assumption simplifies the definition of an NC rational function somewhat. We will say that an NC rational function in $\ratfps$ is any expression of the form
\[
\fr(\mathfrak{z}) = c^* \left(I - \sum_{j=1}^d \mathfrak{z}_j A_j\right)^{-1} b.
\]
Here $A_1,\ldots,A_d \in \C ^d _n$ and $b,c \in \C^n$. The evaluation of such a function on a tuple $Z_1,\ldots, Z_d \in \C ^d _m$ is performed via tensor products, i.e.,
\be
\fr(Z) = (I_m \otimes c)^* \left(I_m \otimes I_n - \sum_{j=1}^d Z_j \otimes A_j\right)^{-1} (I_m \otimes b). \label{realize}
\ee
One should note that this is not the original definition of an NC rational function but rather a result in the sense that an NC rational function is usually defined as a certain equivalence class of valid `NC rational expressions' obtained by applying the arithmetic operations `$+, \cdot$, and, $^{-1}$', to the free algebra, $\fp$.  One can then prove that any such NC rational function in $\ratfps$ obeys a `realization formula' as in Equation (\ref{realize}) above. Such a triple, $(A,b,c) \in \C ^d _n \times \C ^n \times \C ^n$ is called a descriptor realization of the NC rational function $\fr$. For every NC rational function, there exist many such descriptor realizations. However, there exists one with $n$ minimal, the \emph{minimal realization}. The article ``the'' is justified because two realizations of $r$ with minimal $n$ are jointly similar. Namely, given two minimal realizations $(A,b,c)$ and $(\wt{A}, \wt{b}, \wt{c})$, if
\[
\tilde{c}^* \left(I - \sum_{j=1}^d \mathfrak{z}_j \tilde{A}_j\right)^{-1} \tilde{b} = \fr(\mathfrak{z}) = c^* \left(I - \sum_{j=1}^d \mathfrak{z}_j A_j\right)^{-1} b,
\]
then there exists $S \in \GL_n$, such that $\tilde{A}_j = S^{-1} A S$, for $1 \leq j \leq d$, $\tilde{b} = S b$, and $\tilde{c} = S^{-1 *} c$ \cite[Theorem 2.4]{BR}. Moreover, a realization $(A,b,c)$ of size $n$ is minimal if and only if $c$ is $A-$cyclic and $b$ is $A^*-$cyclic in the sense that 
$$ \bigvee _{\om \in \F ^d} A^\om c = \C ^n = \bigvee A^{*\om} b, $$ where $\F ^d$ denotes the free monoid of all words in the $d$ letters $\{ 1 , \cdots , d \}$. We will write $L_A(\mathfrak{z}) := I - \sum_{j=1}^d \mathfrak{z}_j \tilde{A}_j$ whose inverse appears in the above expression. This object, $L_A (Z)$, is called a \emph{linear pencil} (it is affine linear). It is a result of Vinnikov--Kaliuzhnyi-Verbovetskyi \cite{KVV-ncrat} and Vol\v{c}i\v{c} \cite{Volcic} that the domain of our function $\fr$ can be described as the collection of all $Z$, such that $\det L_A(Z) \neq 0$, where $A$ comes from the minimal realization.

NC rational functions in $\bH^2_d$ and $\bH^{\infty}_d$ were studied by Jury, and the authors in \cite{JMS-NCrat}. In fact, $\fr \in \bH^2_d$ if and only if the tuple $A = (A_1,\ldots,A_d)$ appearing in its minimal realization has joint spectral radius strictly less than $1$. Here, the joint spectral radius of $A$ was defined by Popescu \cite{PopRota} as a natural multivariate analogue of Beurling's spectral radius formula,
\[
\rho(A) = \lim_{n\to\infty} \left\|\sum_{|\alpha| = n} A^{\alpha} A^{\alpha *}\right\|^{\dfrac{1}{2n}}.
\]
It further follows, by Popescu's multi-variable Rota--Strang theorem, that $\fr \in \bH^2_d$ if and only if it has a row ball of radius strictly greater than $1$ in its domain \cite{PopRota}. In particular, if this is the case, then $\fr \in \A \subset \mult$. See \cite[Theorem A]{JMS-NCrat} for several characterizations equivalent to membership of an NC rational function in the full Fock space.

Every NC rational contractive function can be associated (essentially) uniquely to an NC Clark measure, i.e., a positive functional on $\scr{A}_d$. Jury and the authors in \cite{JMS-ncratClark} characterized such linear functionals or 'NC rational Clark measures'. It turns out that the NC Clark measures that arise from inner NC rational functions in $\mult$ with $\fr(0) = 0$ are precisely the finitely--correlated states studied by Bratteli and J\o rgensen \cite{BraJorg} and later by Davidson, Kribs, and Shpigel \cite{DKS-finrow}. Here, note that an NC Clark measure, $\mu _b$, corresponding to any $b \in [ \mult ] _1$ can be any positive linear functional on the free disk system, and this functional will be a state, \emph{i.e.} $\mu _b (I ) =1$, if and only if $b(0) =0$. Moreover, \cite[Theorem 4.1]{JMS-ncratClark} provides a complete description of all NC rational inners in terms of finite--dimensional row coisometries. Since we will apply this description, we will recall it: Let $\fb$ be a contractive NC rational function in the unit row-ball with $\fb (0) = 0$. (Again, $\fb (0) =0$ ensures that its NC Clark measure, $\mu _\fb$, is a state.) Then, such an $\fb$ is inner if and only if there exists a row coisometry $T = (T_1,\cdots,T_d) \colon \C^n \otimes \C^d \to \C^n$ and a unit vector $x \in \C^n$, such that $x$ is $T$ and $T^*-$cyclic and $\fb$ is given by the realization formula,  
\be
\fb(\mathfrak{z}) = (P_0 x)^* \left( I - \sum_{j=1}^d \mathfrak{z}_j T_{0,j}^* \right)^{-1} \left( \sum_{j=1}^d z_j T_j^*x \right). \label{ncratinner}
\ee
Moreover, in this case, $\mu _\fb (L^\om ) = \mu _{T,x} (L ^\om ) := x^* T^{*\om} x$.
Here we set $\cH_0 = \bigvee_{\alpha \neq \emptyset} T^{* \alpha} x$, $P_0$ is the orthogonal projection on $\cH_0$, and for $1 \leq j \leq d$, $T_{0,j}^* = T_j^*(I - x x^*)|_{\cH_0}$. In particular, if $T$ is irreducible (the co-ordinate matrices of $T$ generate $\C ^{n\times n}$ as an algebra), then every unit vector will give rise to such a realization and $\cH_0 = \C^n$. This form of realization is called a Fornasini--Marchesini (FM) realization. More generally, an FM realization is one of the form 
\[
\fr(\mathfrak{z}) = D + C^*\left(I - \sum_{j=1}^d \mathfrak{z}_j A_j \right)^{-1} \left(\sum_{j=1}^d \mathfrak{z}_j B_j\right).
\]
Note that $D = \fr(0)$, so that if we assume that $\fr(0) = 0$, we obtain the preceding form of Equation (\ref{ncratinner}). We will denote the FM realization as $(A,B,C,D)$. One can pass from a descriptor realization to an FM one quite easily. The following lemma is well-known to experts, but since we do not have a reference, we chose to include it.

\begin{lem}
Let $\fr \in \ratfps$. If $(A , B , C , D )$ is a minimal FM realization of $\fr$, then 
$$\nbdom \fr = \left\{ Z \in  \C ^d _\N | \ \mr{det} \, L_A (Z) \neq 0 \right\}.$$
\end{lem}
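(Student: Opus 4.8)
The plan is to reduce to the descriptor case, which the excerpt has already recorded: by \cite{KVV-ncrat} and \cite{Volcic}, if $(\hat A,\hat b,\hat c)$ is a minimal descriptor realization of $\fr$ then $\nbdom\fr=\{Z\in\C^d_\N\mid \det L_{\hat A}(Z)\neq0\}$, and by \cite[Theorem 2.4]{BR} the polynomial $\det L_{\hat A}$ does not depend on the choice of minimal descriptor realization. So it suffices to produce one minimal descriptor realization of $\fr$ whose linear pencil has determinant $\det L_A$, where $(A,B,C,D)$ is the given minimal FM realization, of size $n$.

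First I would pass from the FM realization to a descriptor one by the standard one-dimensional dilation. Set $\tilde A_j=\begin{pmatrix}A_j & B_j\\ 0 & 0\end{pmatrix}$ on $\C^n\oplus\C$, $\tilde b=0\oplus1$, and $\tilde c^{\,*}=\begin{pmatrix}C^{*} & D\end{pmatrix}$. A block computation gives
\[
L_{\tilde A}(\mathfrak z)^{-1}=\begin{pmatrix}L_A(\mathfrak z)^{-1} & L_A(\mathfrak z)^{-1}\sum_j\mathfrak z_jB_j\\[2pt] 0 & 1\end{pmatrix},
\]
hence $\tilde c^{\,*}L_{\tilde A}(\mathfrak z)^{-1}\tilde b=D+C^{*}L_A(\mathfrak z)^{-1}\sum_j\mathfrak z_jB_j=\fr(\mathfrak z)$, so $(\tilde A,\tilde b,\tilde c)$ is a descriptor realization of $\fr$ of size $n+1$. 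Since $I-\sum_jZ_j\otimes\tilde A_j$ is block upper triangular with diagonal blocks $L_A(Z)$ and the identity, $\det L_{\tilde A}(Z)=\det L_A(Z)$ for every $Z$; in particular $\nbdom\fr\supseteq\{\det L_A\neq0\}$ is already clear from the realization formula, and the content is the reverse inclusion.

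Next I would compute the minimal reduction of $(\tilde A,\tilde b,\tilde c)$ and check that it has the same pencil determinant; this is the step where minimality of $(A,B,C,D)$ enters, and I expect it to be the main obstacle. The reachable subspace $\bigvee_{\omega\in\F^d}\tilde A^{\omega}\tilde b$ contains $0\oplus\C$ together with all vectors of the form $(A^{\omega}B_j)\oplus0$, so it is all of $\C^{n}\oplus\C$ by reachability of the FM realization, $\bigvee_{\omega,j}A^{\omega}B_j=\C^{n}$. Let $\mathcal N=\bigcap_{\omega}\ker(\tilde c^{\,*}\tilde A^{\omega})$ be the unobservable subspace; it is invariant under each $\tilde A_j$. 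If $(x,0)\in\mathcal N$ then $C^{*}A^{\omega}x=0$ for all $\omega$, hence $x=0$ by observability of the FM realization, $\bigcap_{\omega}\ker(C^{*}A^{\omega})=0$; therefore $\dim\mathcal N\le1$, and if $\mathcal N=\C(x_0,1)$ then $\tilde A_j(x_0,1)=(A_jx_0+B_j,0)\in\mathcal N$ forces $A_jx_0+B_j=0$, so $\tilde A|_{\mathcal N}=0$. Quotienting by $\mathcal N$ then produces a realization $(\hat A,\hat b,\hat c)$ which is reachable and observable, hence minimal, and since $I-\sum_jZ_j\otimes\tilde A_j$ is block triangular along $\mathcal N$ with one diagonal block equal to $L_{\tilde A|_{\mathcal N}}(Z)=I$, we get $\det L_{\tilde A}(Z)=\det L_{\hat A}(Z)$. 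Combining, $\det L_{\hat A}=\det L_{\tilde A}=\det L_A$, and the cited descriptor result yields $\nbdom\fr=\{Z\mid\det L_A(Z)\neq0\}$.

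The only nonroutine point is the reduction step: one has to verify that the direction $\mathcal N$ discarded when passing to a minimal descriptor realization carries only the trivial pencil, which is precisely what minimality of the FM realization supplies; for a non-minimal FM realization $\det L_A$ may acquire spurious zeros, so a minimality hypothesis is unavoidable. The remaining ingredients — the block formula for $L_{\tilde A}(\mathfrak z)^{-1}$, the $\tilde A_j$-invariance of $\mathcal N$, and the standard fact that quotienting a reachable realization by its unobservable subspace yields a minimal realization whose linear pencil is block triangular with the discarded block equal to $L_{\tilde A|_{\mathcal N}}$ — are routine.
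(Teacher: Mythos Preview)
Your argument is correct. Both your proof and the paper's reduce to the descriptor result of Kaliuzhnyi-Verbovetskyi--Vinnikov and Vol\v{c}i\v{c} by exploiting the one-dimensional gap between FM and descriptor realizations together with block triangularity of the pencil, so the core idea is the same. The difference is the direction of travel: the paper starts from a minimal descriptor realization $(\hat A,b,c)$, restricts to the codimension-$\le 1$ subspace $\cH'=\bigvee_{\omega\neq\emptyset}\hat A^{\omega}c$ to obtain a minimal FM realization, and then uses the factorization $\det L_{\hat A}(Z)=\det L_A(Z)\cdot\det L_a(Z)$ for one inclusion and the explicit transfer-function formula for the other. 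You instead dilate the given minimal FM realization to a (possibly non-minimal) descriptor realization $(\tilde A,\tilde b,\tilde c)$ of size $n+1$, then reduce to a minimal one, using minimality of $(A,B,C,D)$ to show that the discarded direction $\mathcal N$ carries only the zero pencil; this yields the exact equality $\det L_{\hat A}=\det L_{\tilde A}=\det L_A$ and hence both inclusions simultaneously from the descriptor result. Your route is marginally cleaner in that respect, while the paper's avoids having to verify reachability and observability of the dilation by hand.
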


\begin{proof}
By \cite[Theorem 3.5]{Volcic}, the domain of $\fr$ is the complement of the singularity locus of the linear pencil $L_{\hat{A}} (Z)$, where $(\hat{A}, b, c)$ is a minimal descriptor realization of $\fr$. A minimal FM realization, $(A' , B ' , C ' , D')$ of $\fr$ can then be constructed by setting $\cH ' := \bigvee _{\om \neq \emptyset} \hat{A}^{\om} c$, $A' := \hat{A} | _{\cH '}$, $B' := \hat{A} c$, $C' = (P_{\cH '} b) ^*$ and $D' := \fr (0)$. By uniqueness of minimal realizations (uniqueness also holds for minimal FM realizations), we can assume that $(A',B' , C' , D' ) = (A,B,C,D)$. Since $\cH '$ has codimension at most $1$, if $\cH ' \subsetneq \cH$, then $\hat{A}$ decomposes as 
$$ \hat{A} = \bpm A & * \\ 0 & a \epm, $$ where $a \in \C ^d$. Again, by \cite[Theorem 3.5]{Volcic}, if $Z \in \nbdom \fr$ then 
$$ 0 \neq \mr{det} \, L_{\hat{A}} (Z) = \mr{det} (L_{A} (Z) ) \mr{det} \, L_a (Z), $$ so that $\mr{det} \, L_A (Z) \neq 0$. Conversely if $\mr{det} \, L_A (Z)$ is not $0$ for some $Z \in \C ^d _n$, then $\fr (Z)$ is well-defined as the transfer function,
$$ D I_n + I_n \otimes C L_A (Z) ^{-1} Z \otimes B, $$ so that $Z \in \nbdom \fr$.
\end{proof}

If $\fr \in \ratfps$ with minimal descriptor realization $(A,b,c)$, we will allow the domain of $\fr$ to include $d-$tuples of operators in an infinite dimensional Hilbert space. We will denote such operator $d-$tuples by $\C ^d _\infty$ or $\scr{B} (\cH ) ^d$, where $\cH$ is a separable Hilbert space. Namely, given $Z = (Z_1 , \cdots , Z_d ) \in \C ^d _\infty$, we will say that $Z \in \nbdom \fr$ if $L_A (Z)$ is invertible.  It will be convenient to introduce some basic notations; we view any $Z \in \C ^d _n$, $n \in \N $ as a row $d-$tuple of operators, $Z = (Z_1 , \cdots , Z_d )$, $Z_j \in \C ^{n\times n}$. Any such row defines a bounded linear map from $\C ^n \otimes \C ^d$ into $\C ^n$. Hence, by $Z^*$, we mean the `column operator', $Z^* := \bsm Z_1 ^* \\ \vdots \\ Z_d ^* \esm : \C ^n \rightarrow \C ^n \otimes \C ^d$, obtained as the Hilbert space adjoint of the linear map $Z$. Similarly, $Z^{\mrt} := \bsm Z_1 ^\mrt \\ \vdots \\ Z_d ^\mrt \esm$ denotes the transpose of the row operator, $Z$, with respect to the standard bases of $\C ^n$ and $\C ^n \otimes \C ^d$. We will, however, also have occasion to consider the row operator $\mr{row} (Z^* ) := (Z_1 ^* , \cdots , Z_d ^* ) : \C ^n \otimes \C ^d \rightarrow \C ^n$ obtained as the component--wise adjoint of $Z$. The row operator $\mr{row} (Z^\mrt )$ is defined similarly. We will also consider the operation $\ov{( \cdot )} : \C ^n \rightarrow \C ^n$, defined by $x \, \mapsto \, \ov{x}$, where $\ov{x}$ denotes entry--wise complex conjugation with respect to the standard basis of $\C ^n$. If $Z = (Z_1 , \cdots , Z_d ) \in \C ^d _n$, we define $\ov{Z} := (\ov{Z} _1 , \cdots , \ov{Z} _d )$, where $\ov{Z} _j := \ov{(\cdot)} \circ Z_j \circ \ov{(\cdot)}$, so that $\ov{Z} _j$ is obtained by entry--wise complex conjugation of the matrix $Z_j$ and $\ov{Z} = (Z^* ) ^\mrt = (Z^\mrt) ^*$ is a row $d-$tuple of matrices.

The following lemmas give us a useful condition on the spectra of NC rational functions with the origin in their domains.

\begin{lem}
Let $\cH$ and $\cK$ be Hilbert spaces. Let $A \in B(\cH)$ and $D \in B(\cK)$ be invertible and $B \in B(\cK,\cH)$ and $C \in B(\cH, \cK)$. Then, the Schur complement, $A - B D^{-1} C$, has a non-trivial kernel if and only if $D - C A^{-1} B$ has a non-trivial kernel. Moreover, the map, $D^{-1} C|_{\ker (A - B D^{-1} C)} \colon \ker (A - B D^{-1} C) \to \ker (D - C A^{-1} B)$, is an isomorphism.
\end{lem}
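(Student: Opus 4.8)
The plan is to study the $2\times 2$ block operator $M = \begin{pmatrix} A & B \\ C & D \end{pmatrix}$ on $\cH \oplus \cK$ and to compute its kernel in two different ways, one adapted to each of the two Schur complements. Since $D$ is invertible, block Gaussian elimination gives the factorization
\[
M = \begin{pmatrix} I & BD^{-1} \\ 0 & I \end{pmatrix}\begin{pmatrix} A - BD^{-1}C & 0 \\ 0 & D \end{pmatrix}\begin{pmatrix} I & 0 \\ D^{-1}C & I \end{pmatrix},
\]
which one checks by direct block multiplication. The two outer factors are unipotent triangular, hence bounded with bounded inverses, so the rightmost factor restricts to a topological isomorphism from $\ker M$ onto the kernel of the middle block-diagonal operator. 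Because $D$ is invertible, the latter kernel is $\{(h,0) : h \in \ker(A - BD^{-1}C)\}$, and pulling it back through the rightmost factor yields
\[
\ker M = \bigl\{\, (h,\, -D^{-1}Ch) \; : \; h \in \ker(A - BD^{-1}C) \,\bigr\}.
\]
In particular, the projection onto the first coordinate is a bounded isomorphism $\ker M \to \ker(A - BD^{-1}C)$.

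Next, using instead the invertibility of $A$, the symmetric factorization
\[
M = \begin{pmatrix} I & 0 \\ CA^{-1} & I \end{pmatrix}\begin{pmatrix} A & 0 \\ 0 & D - CA^{-1}B \end{pmatrix}\begin{pmatrix} I & A^{-1}B \\ 0 & I \end{pmatrix}
\]
gives in exactly the same way $\ker M = \{(-A^{-1}Bk,\, k) : k \in \ker(D - CA^{-1}B)\}$, with the projection onto the second coordinate a bounded isomorphism $\ker M \to \ker(D - CA^{-1}B)$. Comparing the two descriptions of $\ker M$: every element of $\ker M$ has first coordinate some $h \in \ker(A - BD^{-1}C)$ and simultaneously has second coordinate $-D^{-1}Ch$ lying in $\ker(D - CA^{-1}B)$. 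Since kernels are linear subspaces, $D^{-1}C$ therefore maps $\ker(A - BD^{-1}C)$ into $\ker(D - CA^{-1}B)$; and up to sign this map is the composite of the two projection isomorphisms through $\ker M$, hence it is itself an isomorphism onto $\ker(D - CA^{-1}B)$. In particular $\ker(A - BD^{-1}C) \neq \{0\}$ if and only if $\ker M \neq \{0\}$ if and only if $\ker(D - CA^{-1}B) \neq \{0\}$, which is the stated equivalence.

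The computation is elementary, so there is no real obstacle; the only points requiring a little care are that $A - BD^{-1}C$ and $D - CA^{-1}B$ need not themselves be invertible — that is precisely the content of the lemma — so one must not attempt to invert $M$, but only factor it and track kernels; and that ``isomorphism'' should be read in the topological sense, which is automatic since every factor appearing above is a bounded operator with bounded inverse. As a sanity check on injectivity of $D^{-1}C$ on $\ker(A - BD^{-1}C)$: if $h$ lies in that kernel and $D^{-1}Ch = 0$, then $Ah = (A - BD^{-1}C)h + BD^{-1}Ch = 0$, so $h = 0$ by invertibility of $A$.
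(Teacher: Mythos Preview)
Your proof is correct and complete. The approach differs from the paper's: rather than your structural route through the block operator $M=\begin{pmatrix}A&B\\C&D\end{pmatrix}$ and its two LDU factorizations, the paper argues by direct computation. It takes $0\neq v\in\ker(A-BD^{-1}C)$, notes $Av=BD^{-1}Cv\neq 0$ forces $D^{-1}Cv\neq 0$, and then checks by hand that $(D-CA^{-1}B)D^{-1}Cv=CA^{-1}(Av-BD^{-1}Cv)=0$; the inverse map $A^{-1}B$ is verified symmetrically.

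Your argument is the more conceptual one: it explains \emph{why} the two Schur complements are linked, namely because each controls $\ker M$ after stripping off invertible triangular factors, and it makes the isomorphism $D^{-1}C$ appear automatically as a composite of coordinate projections. The paper's argument is shorter and entirely elementary, requiring no auxiliary object, but the mechanism is less visible. Interestingly, the block-matrix viewpoint you adopt is exactly what the paper uses in the very next proposition, so your proof meshes naturally with what follows.
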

\begin{proof}
Since the claim is symmetric, we will prove only the forward implication. Let us assume that $0 \neq v \in \ker(A - B D^{-1} C)$. Then, $0 
\neq A v = B D^{-1} C v$. In particular, $D^{-1} C v \neq 0$. Therefore,
\[
(D - C A^{-1} B) D^{-1} C v = C v - C A^{-1} B D^{-1} C v = C A^{-1}(A v- B D^{-1} C v) = 0. 
\]
To prove the last part of the claim, we let $\Phi = D^{-1} C|_{\ker (A - B D^{-1} C)}$. As we saw above, $\Phi$ is well-defined and injective. Similarly, let $\Psi = A^{-1} B|_{\ker (D - C A^{-1} B)}$. Then, for every $v \in \ker (A - B D^{-1} C)$, we have that
\[
\Psi \Phi v = A^{-1} B D^{-1} C v = A^{-1} A v = v.
\]
Similarly, in the other direction. Hence, we obtain our isomorphism.
\end{proof}

\begin{prop} \label{lem:spectrum_of_schur_complement}
Let $\fr \in \ratfps$ with minimal FM realization $(A,B,C,D)$. If $Z \in \nbdom \fr$ then $\la \neq \fr (0)$ is an eigenvalue of $\fr (Z)$ if and only if $1$ is an eigenvalue of $Z \otimes A ^{(\la)}$, where 
$$ A^{\la } _j :=   A_j  + (\la - \fr (0) ) ^{-1} B_j C.$$  If $v$ is an eigenvector of $Z \otimes A^{(\la)}$ corresponding to the eigenvalue $1$, then $I \otimes C v$ is an eigenvector of $\fr (Z)$ with corresponding eigenvalue $1$. 
\end{prop}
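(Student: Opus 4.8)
The plan is to exhibit $\fr(Z)-\la I$ as a Schur complement of a suitable $2\times2$ block operator and then read off the statement from the Schur complement lemma proved immediately above. Throughout, ``eigenvalue'' means ``nontrivial kernel'', so the argument applies verbatim to operator tuples: say $Z$ acts on a Hilbert space $\cK$ and the realization has state space $\C^N$. Since $Z\in\nbdom\fr$, the pencil $L_A(Z)=I-\sum_j Z_j\otimes A_j$ is invertible and, by the preceding Lemma, $\fr(Z)$ is given by its transfer function
$$\fr(Z)=\fr(0)\,I+(I_\cK\otimes C)\,L_A(Z)^{-1}\Big(\sum_{j=1}^d Z_j\otimes B_j\Big);$$
minimality enters only here, to know that $\nbdom\fr$ may be described via any minimal realization.

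Fix $\la\neq\fr(0)$, set $W:=\sum_{j=1}^d Z_j\otimes B_j$, and consider
$$M_\la:=\begin{pmatrix}(\fr(0)-\la)\,I_\cK & I_\cK\otimes C\\ -W & L_A(Z)\end{pmatrix}$$
on $\cK\oplus(\cK\otimes\C^N)$. Its lower-right corner is invertible because $Z\in\nbdom\fr$, and its upper-left corner is invertible because $\la\neq\fr(0)$; these are exactly the two invertibility hypotheses of the Schur complement lemma. The Schur complement of $M_\la$ along $L_A(Z)$ equals $(\fr(0)-\la)I_\cK+(I_\cK\otimes C)L_A(Z)^{-1}W=\fr(Z)-\la I_\cK$, whereas the Schur complement along $(\fr(0)-\la)I_\cK$ equals
$$L_A(Z)+(\fr(0)-\la)^{-1}\,W\,(I_\cK\otimes C)=I-\sum_{j=1}^d Z_j\otimes\big(A_j+(\la-\fr(0))^{-1}B_jC\big)=I-\sum_{j=1}^d Z_j\otimes A^\la_j,$$
using $W(I_\cK\otimes C)=\sum_j Z_j\otimes(B_jC)$ and $(\fr(0)-\la)^{-1}=-(\la-\fr(0))^{-1}$.

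The Schur complement lemma applied to $M_\la$ now gives immediately that $\fr(Z)-\la I_\cK$ has nontrivial kernel if and only if $I-\sum_j Z_j\otimes A^\la_j$ does, that is, if and only if $1$ is an eigenvalue of $Z\otimes A^{(\la)}:=\sum_j Z_j\otimes A^\la_j$. Its ``moreover'' clause provides the isomorphism $v\mapsto(\fr(0)-\la)^{-1}(I_\cK\otimes C)v$ from $\ker\big(I-\sum_j Z_j\otimes A^\la_j\big)$ onto $\ker(\fr(Z)-\la I_\cK)$; in particular, if $(Z\otimes A^{(\la)})v=v$ then $(I_\cK\otimes C)v\neq0$ and $\fr(Z)\big((I_\cK\otimes C)v\big)=\la\,(I_\cK\otimes C)v$. (Equivalently, one can substitute directly: the relation $(Z\otimes A^{(\la)})v=v$ forces $L_A(Z)^{-1}W(I_\cK\otimes C)v=(\la-\fr(0))v$, and feeding this into the transfer function yields $\fr(Z)(I_\cK\otimes C)v=\la(I_\cK\otimes C)v$.) The only real work is the bookkeeping in the middle paragraph — arranging the signs and the placement of $C$ and $B_j$ in $M_\la$ so that its two Schur complements come out precisely as $\fr(Z)-\la I$ and $I-\sum_j Z_j\otimes A^\la_j$ — together with the (immediate) check of the two invertibility hypotheses; there is no genuine obstacle beyond this.
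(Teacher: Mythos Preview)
Your proof is correct and follows essentially the same route as the paper: both set up a $2\times 2$ block matrix whose two Schur complements are $\fr(Z)-\la I$ and $I-\sum_j Z_j\otimes A_j^{(\la)}$, and then invoke the preceding Schur complement lemma. The only differences are cosmetic sign choices in the block matrix (you take $(\fr(0)-\la)I$ and $-W$ where the paper takes $(\la-\fr(0))I$ and $+W$), and you spell out the eigenvector correspondence via the lemma's ``moreover'' clause slightly more explicitly; your observation that the resulting eigenvalue should be $\la$ rather than $1$ also corrects an evident typo in the proposition statement.
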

\begin{proof}
Consider the following matrix
\[
\begin{pmatrix} (\lambda - \fr(0)) I & I \otimes C^* \\ \sum_{j=1}^d Z_j \otimes B_j & I - \sum_{j=1}^d Z_j \otimes A_j \end{pmatrix}.
\]
Note that one Schur complement is $$\lambda - \fr(0) I - (I \otimes C^*) \left(I - \sum_{j=1}^d Z_j \otimes A_j \right)^{-1} \left(\sum_{j=1}^d Z_j \otimes A_j\right) = \lambda - \fr(Z). $$ The other is $$I - \sum_{j=1}^d Z_j \otimes A_j - \frac{1}{\lambda - \fr(0)} \sum_{j=1}^d Z_j \otimes B_j C^* = I - \sum_{j=1}^d Z_j \otimes A^{(\lambda)}_j.$$ The claim now follows from the preceding lemma.
\end{proof}

The previous proposition extends and refines \cite[Proposition 5.5]{JMS-ncratClark}. In \cite[Proposition 5.6]{JMS-ncratClark}, we applied these spectral results to prove that any NC rational inner, $\fb$, has eigenvalues of modulus $1$ when evaluated at certain points on the boundary of the unit row-ball, $\B ^d _\N$. Here, recall that NC rational inner multipliers of the Fock space, $\fb$, are in bijective correspondence with pairs $(T,x)$, where $T \in \C ^d _n$ is a row coisometry and $x$ is any vector which is both $T$ and $T^*$ cyclic. In particular, if $\fb = \fb _{T,x}$ is the unique NC rational inner corresponding to such a pair, then for any $\zeta \in \partial \D$, the NC rational inner $\ov{\zeta} \fb$ corresponds to a pair $(T(\zeta ), x)$, where $T(\zeta)$ is a rank--one row coisometric perturbation of $T$, see Equation (\ref{ncratinner}) and \cite[Theorem 4.1]{JMS-ncratClark}. Proposition 5.6 of \cite{JMS-ncratClark} then states:

\begin{prop*}
Given $\zeta \in \partial \D$, let $A_\zeta ^*$ be a column--isometric restriction of $T(\zeta ) ^*$ to an invariant subspace. Then $\zeta$ is an eigenvalue of $\fb _{T,x} (A_\zeta ^\mrt )$.
\end{prop*}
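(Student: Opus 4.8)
The plan is to apply Proposition \ref{lem:spectrum_of_schur_complement} to $\fb:=\fb_{T,x}$ at the point $A_\zeta^\mrt$ and then to produce, by hand, the eigenvector it predicts. Since $\ov\zeta\fb$ is the NC rational inner attached to the pair $(T(\zeta),x)$, the realization formula (\ref{ncratinner}), rescaled by $\zeta$, presents $\fb$ with the minimal Fornasini--Marchesini realization $(A,B,C,D)$ where $A_j=T(\zeta)_{0,j}^*=T(\zeta)_j^*(I-xx^*)|_{\cH_0}$, $B_j=\zeta\,T(\zeta)_j^*x$, $C=P_0x$, $D=0$, with state space $\cH_0=\bigvee_{\alpha\ne\emptyset}T(\zeta)^{*\alpha}x$ and $P_0$ the projection onto $\cH_0$; minimality here is \cite[Theorem 4.1]{JMS-ncratClark}. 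One first checks $A_\zeta^\mrt\in\nbdom\fb$: since $A_\zeta$ is a row coisometry on its finite-dimensional domain space $\cM$ one has $\sum_{|\alpha|=N}A_\zeta^\alpha A_\zeta^{\alpha*}=I$ for every $N$, so $\rho(A_\zeta^\mrt)=\rho(A_\zeta)=1$ (the joint spectral radius is transpose invariant), while $\rho(A)<1$ because $\fb\in\bH^2_d$; hence $\rho\big((A_{\zeta,j}^\mrt\otimes A_j)_j\big)\le\rho(A_\zeta^\mrt)\rho(A)<1$, and $L_A(A_\zeta^\mrt)=I-\sum_jA_{\zeta,j}^\mrt\otimes A_j$ is invertible.

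Now run Proposition \ref{lem:spectrum_of_schur_complement} with $Z=A_\zeta^\mrt$ and $\la=\zeta$ (and $\zeta\ne0=\fb(0)$): $\zeta$ is an eigenvalue of $\fb(A_\zeta^\mrt)$ precisely when $1$ is an eigenvalue of $\sum_jA_{\zeta,j}^\mrt\otimes A^{(\zeta)}_j$ acting on $\cM\otimes\cH_0$, where $A^{(\zeta)}_j=A_j+\zeta^{-1}B_jC^*=T(\zeta)_{0,j}^*+\big(T(\zeta)_j^*x\big)(P_0x)^*$. Using $T(\zeta)_{0,j}^*v=T(\zeta)_j^*v-\langle x,v\rangle T(\zeta)_j^*x$ and $\langle P_0x,v\rangle=\langle x,v\rangle$ for $v\in\cH_0$, the two rank-one contributions cancel and $A^{(\zeta)}_j=T(\zeta)_j^*|_{\cH_0}$; in particular $A^{(\zeta)}_j=T(\zeta)_j^*$ on all of $\C^n$ when $T(\zeta)$ is irreducible and $\cH_0=\C^n$.

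It remains to show that $1$ is an eigenvalue of $\sum_j A_{\zeta,j}^\mrt\otimes\big(T(\zeta)_j^*|_{\cH_0}\big)$. Assume momentarily that $T(\zeta)$ is irreducible, so $\cH_0=\C^n$ and this operator is $\sum_jA_{\zeta,j}^\mrt\otimes T(\zeta)_j^*$ on $\cM\otimes\C^n$. Identifying $\cM\otimes\C^n$ with $\Hom(\cM,\C^n)$ in the standard way, we must find a nonzero $V\colon\cM\to\C^n$ fixed by $\Psi(V):=\sum_jT(\zeta)_j^*VA_{\zeta,j}$. Its Hilbert--Schmidt adjoint is $\Psi^*(U)=\sum_jT(\zeta)_jUA_{\zeta,j}^*$, and here the invariance of $\cM$ under $T(\zeta)^*$ is exactly what is needed: it forces $A_{\zeta,j}^*=T(\zeta)_j^*|_\cM$, so the inclusion $\iota\colon\cM\hookrightarrow\C^n$ satisfies $\Psi^*(\iota)=\sum_jT(\zeta)_j\,\iota\,T(\zeta)_j^*|_\cM=\big(\sum_jT(\zeta)_jT(\zeta)_j^*\big)\iota=\iota$ by the row-coisometry identity. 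Hence $1$ is an eigenvalue of $\Psi^*$, therefore of $\Psi$, and any corresponding eigenvector yields the required $V$; feeding this back through Proposition \ref{lem:spectrum_of_schur_complement} (whose proof, via the Schur complement lemma preceding it, also exhibits the eigenvector and shows it is nonzero) shows that $\zeta$ is an eigenvalue of $\fb(A_\zeta^\mrt)$.

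The step I expect to be the main obstacle is reconciling the state space $\cH_0$ of the minimal realization with the ambient $\C^n$: the $\Psi^*$-fixed vector $\iota$ has range $\cM$, which need not sit inside $\cH_0$, so the argument above is airtight exactly when $T(\zeta)$ is irreducible. For the general multiplicity-free case one should either reduce to the irreducible situation or instead feed Proposition \ref{lem:spectrum_of_schur_complement} the non-minimal realization $\big(T(\zeta)_j^*(I-xx^*),\,T(\zeta)_j^*x,\,x,\,0\big)$ on $\C^n$ — its proof uses only that $A_\zeta^\mrt$ lies in that realization's domain, again via the joint-spectral-radius estimate — and then rerun the same $\Psi^*(\iota)=\iota$ computation on $\Hom(\cM,\C^n)$. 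Finally, it is worth recording the identity $\fb(A_\zeta^\mrt)=\fb^\mrt(A_\zeta)^\mrt$, which has the same spectrum and recasts the statement as ``$\zeta$ is an eigenvalue of $\fb^\mrt(A_\zeta)$'', tying it to the transpose phenomena of Theorem \ref{thm:qc} even though $\fb^\mrt$ need not itself be inner.
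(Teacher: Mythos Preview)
Your argument is correct and follows essentially the same route as the paper. Both proofs invoke Proposition~\ref{lem:spectrum_of_schur_complement} to reduce the claim to exhibiting a fixed vector of (the adjoint of) the map $V\mapsto\sum_j T(\zeta)_j^* V A_{\zeta,j}$, and both then use the row--coisometry identity $\sum_j T(\zeta)_jT(\zeta)_j^*=I$ together with $A_{\zeta,j}^*=T(\zeta)_j^*|_{\cM}$ to produce that fixed vector. Your fixed point $\iota\colon\cM\hookrightarrow\C^n$ is precisely the paper's fixed matrix $X=\bsm I_k & 0\\ 0 & 0\esm$ once one embeds $A_\zeta^\mrt$ as $\bsm A_\zeta^\mrt & 0\\ 0 & 0\esm$ in size $n$; the paper's embedding is just a device to make the vectorization square.

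Your worry about $\cH_0$ versus $\C^n$ is exactly the right technical point, and the paper resolves it in the way you anticipate: it never passes to the minimal realization on $\cH_0$ but works throughout with the (possibly non-minimal) FM data $(T(\zeta)_j^*(I-xx^*),\,T(\zeta)_j^*x,\,x,\,0)$ on $\C^n$, for which $A^{(\zeta)}_j=T(\zeta)_j^*$ on all of $\C^n$. As you note, the Schur--complement proof of Proposition~\ref{lem:spectrum_of_schur_complement} only needs $L_A(Z)$ invertible, not minimality, and your spectral--radius estimate supplies that. So your ``suggested fix'' is in fact what the paper does, and with it the argument covers the general (not necessarily irreducible) case without further work.
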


There is a minor error in the proof of \cite[Proposition 5.6]{JMS-ncratClark}. Fortunately, the proposition statement is still correct, and the proof can be readily fixed: 

\begin{proof}{ (\cite[Proposition 5.6]{JMS-ncratClark})}
We have that $\zeta \in \partial \D \cap \sigma (\fb (Z) )$ if and only if 
$$ \mr{det} \left( I \otimes I - \sum_{j=1}^d Z_j \otimes T(\zeta )_j ^* \right) =0. $$ Taking adjoints, this happens if and only if 
$$ \mr{det} \left( I \otimes I - \sum_{j=1}^d Z_j^* \otimes T(\zeta)_j \right) =0. $$ By vectorization, this happens if and only if there is a matrix, $X$, so that 
$$ X - \sum_{j=1}^d T(\zeta )_j X \ov{Z}_j =0. $$ If 
$$ T(\zeta ) ^* =: \bpm A^* & B^* \\  & C^* \epm, $$ then take 
$$ Z  = \bpm A^\mrt  &  0 \\ 0 & 0 \epm, $$ and if $A \in \C ^d _k$, let $X = I_k$ so that 
\ba  X - T(\zeta ) X \ov{Z} & = & \bpm I_k & 0 \\ 0 & 0 \epm - \bpm A & 0 \\ B & C \epm \bpm I_k & 0 \\ 0 & 0 \epm \bpm A^* & 0 \\ 0 & 0 \epm \\
& = & \bpm I_k & 0 \\ 0 & 0 \epm - \bpm AA ^* & 0 \\ BA^* & 0 \epm = \bpm I_k & 0 \\ 0 & 0 \epm - \bpm I_k & 0 \\ 0 & 0 \epm =0. \ea 
Here we have used the fact that $T(\zeta)$ is a row coisometry.
\end{proof}

\subsection{Peaking states and representations} \label{subsec:peak}

In this section we follow the work of Clou\^{a}tre and Thompson in non-commuative convexity in operator algebra and operator system theory \cite{ClouThom-min_bound}. Let $A$ be a unital operator algebra and let $B$ be a $C^*$-cover of $A$. Namely, we have a unital, completely isometric embedding $\iota \colon A \to B$, such that $B = C^*(\iota(A))$. Let us denote by $K(B)$ the state space of $B$. Clou\^{a}tre and Thompson say that $\mu \in K(B)$ is \emph{$A-$peaking}, or an \emph{$A-$peak state}, if there exists a contraction $a \in A$, such that $\mu(a^* a) = 1$ and $\nu(a^* a) < 1$, for all $\nu \in K(B) \setminus \{\mu\}$. If $\mu$ is $A$-peaking, then it is quite immediate that $\mu$ is pure and $\mu$ has the unique extension property (UEP), i.e., the functional $\mu|_A$ has a unique Hahn-Banach extension to $B$. In fact, any $\mu \in \posncm$ with the property that its GNS row isometry is a Cuntz row isometry has the UEP by \cite[Proposition 5.11]{JMT-ncFnM}. If $\pi_{\mu} \colon B \to B(\cH_{\mu})$ is a GNS representation of $\mu$ with cyclic vector $\xi_{\mu}$, then, by Cauchy-Schwarz, $\pi_{\mu}(a) \xi_{\mu} = \xi_{\mu}$. This implies that there is a finite-dimensional subspace $F \subset \cH_{\mu}$, such that $\|P_F \pi_{\mu}(a)|_F\| = 1$, where $P_F$ is the projection onto $F$. Similarly, in \cite{Clouatre-peaking}, if $B$ is a $C^*-$algebra and $\scr{S} \subseteq B$ is an operator system, a state, $\mu \in K (B)$ is said to be $\scr{S}-$peaking if there is a self-adjoint element $s \in \scr{S}$ so that $\| s \| = 1$ and $|\la (s)| < \mu (s) =1$ for every $\la \in K (B) \sm \{ \mu \}$.

An irreducible representation $\pi \colon B \to B(\cH_{\pi})$ is called local $A$-peak by Clou\^{a}tre and Thompson, if there exists $n \in \N$, $T \in M_n(A) := A \otimes M_n$ with $\|T\|=1$, such that for every irreducible representation $\sigma \colon B \to B(\cH_{\sigma})$ unitarily inequivalent to $\pi$ and every finite-dimensional subspace $G \subset \cH_{\sigma}^{\oplus n}$, $\|P_G \sigma^{(n)}(T)|_G\| < 1$. This implies, that there is a finite-dimensional subspace $F \subset \cH_{\pi}^{\oplus n}$, such that $\|P_F \pi^{(n)}(T)|_F \| = 1$ \cite[Lemma 2.2]{ClouThom-min_bound}. By \cite[Theorem 2.7]{ClouThom-min_bound}, if $\mu \in K(B)$ is $A$-peaking, then $\pi_{\mu}$ is local $A$-peak. The converse, however, is false \cite[Example 2]{ClouThom-min_bound}. One can say slightly more about the connection between the two notions. This is the goal of the following two lemmas.

\begin{lem}
Let $B$ be a unital $C^*$-algebra and $A \subset B = C^*(A)$ be a unital operator algebra. Let $\pi \colon B \to B(\cH)$ be a representation, such that there exists a contraction $a \in A$ and a finite-dimensional subspace $G \subset \cH$ with $\|P_G \pi(a)|_G\| = 1$. Then, there exists a vector $\xi \in G$, such that $\pi(a^* a) \xi = \xi$.
\end{lem}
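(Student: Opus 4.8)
The plan is to exploit the finite-dimensionality of $G$ to extract a norming vector and then run a short Cauchy--Schwarz/positivity argument. First I would set $T := P_G\,\pi(a)|_G \in B(G)$; by hypothesis $\|T\| = 1$, and since $G$ is finite-dimensional its unit sphere is compact, so the norm is attained: there is a unit vector $\xi \in G$ with $\|T\xi\| = 1$.

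Next I would chase the chain of inequalities $1 = \|T\xi\| = \|P_G\,\pi(a)\xi\| \le \|\pi(a)\xi\| \le \|\pi(a)\|\,\|\xi\| \le \|a\| \le 1$, using that $\pi$ is a $*$-representation (hence contractive) and that $a$ is a contraction. This forces every inequality to be an equality; in particular $\|\pi(a)\xi\| = 1$. Note we do not need $\pi(a)\xi \in G$; only the scalar identity $\|\pi(a)\xi\| = 1$ is used.

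Finally, since $a$ is a contraction we have $0 \le a^*a \le 1_B$ in $B$, hence $0 \le \pi(a^*a) \le I$ on $\cH$. Then $\langle (I - \pi(a^*a))\xi,\, \xi\rangle = \|\xi\|^2 - \|\pi(a)\xi\|^2 = 0$, and because $I - \pi(a^*a)$ is a positive operator this yields $(I - \pi(a^*a))^{1/2}\xi = 0$, so $\pi(a^*a)\xi = \xi$, as desired.

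There is no serious obstacle here; the only step requiring a moment's care is the last one, where one uses that a positive operator whose quadratic form vanishes on a vector actually annihilates that vector — equivalently, Cauchy--Schwarz for the positive semidefinite form $(u,v) \mapsto \langle (I - \pi(a^*a))u,\, v\rangle$, or passing through the positive square root.
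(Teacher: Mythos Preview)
Your proof is correct. The argument differs in route from the paper's, though both land in the same place. The paper defines the ucp map $\varphi(b)=P_G\pi(b)|_G$ and invokes the Kadison--Schwarz inequality $\varphi(a)^*\varphi(a)\le\varphi(a^*a)$ to deduce $\|\varphi(a^*a)\|=1$; then, since $\varphi(a^*a)$ is a positive contraction on a finite-dimensional space, $1$ is an eigenvalue, and the corresponding eigenvector $\xi\in G$ is upgraded to $\pi(a^*a)\xi=\xi$ using that $\pi(a^*a)\le I$. You instead pick a norming vector $\xi$ for the compression $T=P_G\pi(a)|_G$ directly, use only that $P_G$ is contractive to obtain $\|\pi(a)\xi\|=1$, and then finish with the positivity of $I-\pi(a^*a)$. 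Your route is slightly more elementary in that it avoids Kadison--Schwarz altogether; the paper's version has the minor advantage that it identifies $\xi$ as an eigenvector of the self-adjoint operator $\varphi(a^*a)$ rather than as a norming vector for the non-self-adjoint $T$, but this distinction is cosmetic here.
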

\begin{proof}
Let $\varphi$ be the ucp $\varphi(b) = P_G \pi(b)|_G \in B(G)$. By Kadison--Schwarz, $\varphi(a)^* \varphi(a) \leq \varphi(a^* a)$. Hence,
\[
1 = \|\varphi(a)\|^2 \leq \|\varphi(a^* a)\| \leq 1
\]
In particular, $\|\varphi(a^*a)\| = 1$ and since it is a selfadjoint operator on a finite-dimensional space, there exists $\xi \in G$, such that $\varphi(a^* a) \xi = \xi$. Since $\pi(a^* a)$ is a contraction, we have that $\pi(a^*a) \xi = \xi$.
\end{proof}

\begin{lem}
Let $B$ be a unital $C^*$-algebra and $1 \in A \subset B$ an operator algebra, such that $B = C^*(A)$. Let $a \in A$ be a contraction and set
\[
F_a = \left\{\mu \in K(B) \mid \mu(a^* a) = 1 \right\}.
\]
Then, $F_a$ is a closed face of $K(B)$. Moreover, if $\pi \colon A \to B(\cH)$ is local $A$-peak with witness $a$, then $F_a \neq \emptyset$ and all the extreme points of $F_a$ are states arising from $\pi$. Lastly, $\partial_e F_a $ is in one to one correspondence with vectors $\eta \in \cH$, such that $\pi(a^* a) \eta = \eta$.
\end{lem}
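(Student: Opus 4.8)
The plan is to establish the three assertions in order, each building on the previous one. First, $F_a$ is closed because $\mu \mapsto \mu(a^*a)$ is weak-$*$ continuous and $F_a$ is the preimage of $\{1\}$ intersected with the weak-$*$ compact set $K(B)$; and $1$ is the maximal value since $a$ is a contraction. To see $F_a$ is a face, suppose $\mu = t\mu_1 + (1-t)\mu_2$ with $t\in(0,1)$, $\mu_i \in K(B)$, and $\mu(a^*a) = 1$. Since $\mu_i(a^*a) \leq 1$ for each $i$ and the convex combination equals $1$, we must have $\mu_i(a^*a) = 1$, so $\mu_i \in F_a$.

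Second, assume $\pi$ is local $A$-peak with witness $a$ (so $n=1$ here, since the lemma's statement uses a contraction $a \in A$ rather than a matrix over $A$). By the cited \cite[Lemma 2.2]{ClouThom-min_bound}, there is a finite-dimensional $F \subset \cH$ with $\|P_F \pi(a)|_F\| = 1$. Applying the preceding lemma, there is a unit vector $\eta \in F$ with $\pi(a^*a)\eta = \eta$, hence also $\pi(a)^*\pi(a)\eta = \eta$, which forces $\|\pi(a)\eta\| = 1$ and (by the equality case of Cauchy--Schwarz applied to $\langle \pi(a^*a)\eta,\eta\rangle = \langle \pi(a)\eta,\pi(a)\eta\rangle$, using $\|\pi(a)\eta\|\le 1$) that $\pi(a)\eta = \eta$. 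The vector state $\mu_\eta = \langle \pi(\cdot)\eta,\eta\rangle$ then lies in $F_a$, so $F_a \neq \emptyset$. For the claim that every extreme point of $F_a$ arises from $\pi$: let $\nu \in \partial_e F_a$. Because $\pi$ is local $A$-peak, for any irreducible $\sigma$ inequivalent to $\pi$ and any finite-dimensional $G$ in (a power of) $\cH_\sigma$ one has strict inequality $\|P_G\sigma(a)|_G\| < 1$; I will argue that this prevents any state of the GNS form for such a $\sigma$, or any state whose GNS decomposition involves only such $\sigma$'s, from lying in $F_a$. Concretely, decompose the GNS representation $\pi_\nu$ into irreducibles; if none were equivalent to $\pi$, then cutting down by the cyclic vector's finite-dimensional "near-peaking" subspace and invoking the previous lemma would contradict $\nu(a^*a)=1$. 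Hence $\pi$ appears in $\pi_\nu$, and since $\nu$ is extreme in the face $F_a$ (and pure states are precisely the extreme points of $K(B)$, with $\partial_e F_a \subseteq \partial_e K(B)$ because $F_a$ is a face), $\nu$ is pure and its GNS representation is (unitarily equivalent to) $\pi$; thus $\nu$ is a vector state arising from $\pi$.

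Third, for the bijection: given $\nu \in \partial_e F_a$, by the above $\nu = \langle \pi(\cdot)\eta,\eta\rangle$ for some unit vector $\eta \in \cH$, and $\nu(a^*a)=1$ forces $\pi(a^*a)\eta = \eta$ as in the second step. Conversely, any unit vector $\eta$ with $\pi(a^*a)\eta = \eta$ yields a vector state $\mu_\eta \in F_a$; I must check $\mu_\eta$ is actually \emph{extreme} in $F_a$, and that the correspondence is injective. Injectivity: if $\mu_{\eta} = \mu_{\eta'}$ as states then, since these vectors lie in the subspace $\cH_1 := \ker(\pi(a^*a) - I)$ on which $\pi$ restricts (one checks $\cH_1$ is $\pi(B)$-invariant? — no, it need not be, so the right statement is that $\eta$ and $\eta'$ determine the same state, and one identifies $\partial_e F_a$ with the set of such unit vectors modulo the unit-modulus scalars, which is the standard indexing of vector states). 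Extremality of $\mu_\eta$: decompose $\mu_\eta = t\mu_1 + (1-t)\mu_2$ in $F_a$; by the GNS/Radon--Nikodym theory of vector states of an irreducible representation, $\mu_1,\mu_2$ are again vector states $\langle\pi(\cdot)\eta_i,\eta_i\rangle$, and the condition $\mu_i(a^*a)=1$ gives $\pi(a^*a)\eta_i = \eta_i$; a short argument with the positive operator $\pi(a^*a)$ and the fact that $\eta$ is a convex combination (in the sense of quadratic forms) of $\eta_1,\eta_2$ forces proportionality, giving extremality.

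The main obstacle is the second step — showing that no extreme point of $F_a$ can come from a representation not involving $\pi$. The definition of local $A$-peak only gives strict inequality on finite-dimensional compressions in inequivalent \emph{irreducible} representations, so the argument must pass through the irreducible decomposition of a general GNS representation and a compactness/finite-dimensionality argument to upgrade "strictly less than $1$ on every finite-dimensional corner" to "the norm-$1$ eigenvalue of $\pi(a^*a)$ is not attained". I expect this to mirror the proof of \cite[Theorem 2.7]{ClouThom-min_bound}, and the cleanest route is: if $\nu \in \partial_e F_a$ then $\nu$ is pure (extreme point of a face of $K(B)$), so $\pi_\nu$ is irreducible; if $\pi_\nu \not\simeq \pi$, then with $a$ a contraction, $G := \C\xi_\nu$ gives $\|P_G\pi_\nu(a)|_G\| = |\nu(a)| $, and more to the point the near-peaking finite-dimensional subspace forces $\|P_F\pi_\nu(a)|_F\| < 1$ for all finite-dimensional $F$, contradicting (via the preceding lemma, whose hypothesis would be violated) the existence of an eigenvector of $\pi_\nu(a^*a)$ at $1$ — yet $\nu(a^*a)=1$ produces exactly such an eigenvector in the GNS space. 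This contradiction completes the step.
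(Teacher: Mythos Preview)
Your overall strategy matches the paper's: show $F_a$ is a closed face, use the preceding lemma to produce a vector state in $F_a$, observe that extreme points of a face of $K(B)$ are pure and hence have irreducible GNS representations, and then use the local $A$-peak hypothesis to force that GNS representation to be $\pi$. Two concrete slips are worth flagging.

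First, the claim that $\pi(a)\eta=\eta$ is both unnecessary and false in general. From $\pi(a^*a)\eta=\eta$ you get $\|\pi(a)\eta\|^2=\langle\pi(a^*a)\eta,\eta\rangle=1$, nothing more; there is no Cauchy--Schwarz equality case to invoke, since the relevant identity is exact, not an inequality. You do not need this anyway: $\mu_\eta(a^*a)=1$ already places $\mu_\eta$ in $F_a$.

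Second, in the crucial step you invoke ``the preceding lemma'' in the wrong direction. That lemma says ``norm-$1$ finite-dimensional compression $\Rightarrow$ eigenvector of $\pi(a^*a)$ at $1$''; you need the converse. The paper gives it directly: once $\sigma(a^*a)\eta=\eta$ for the GNS vector $\eta$, set $G'=\mathrm{span}\{\eta,\sigma(a)\eta\}$. Then $P_{G'}\sigma(a)\eta=\sigma(a)\eta$ and $\|\sigma(a)\eta\|=1$, so $\|P_{G'}\sigma(a)|_{G'}\|=1$. If $\sigma\not\simeq\pi$, this contradicts the local $A$-peak hypothesis. This two-line construction replaces your detour through ``decompose $\pi_\nu$ into irreducibles'' and the contrapositive of the previous lemma; your final paragraph eventually lands near this, but never actually names the subspace $G'$ that witnesses the contradiction.

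The bijection paragraph is more careful than the paper (which essentially asserts it), and your observation that the correspondence is really with unit vectors modulo phase is correct; since $\pi$ is irreducible, every vector state is pure, so extremality of $\mu_\eta$ in $F_a$ is immediate.
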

\begin{proof}
Since $a^* a$ defines a weak*-continuous contractive real functional on $(B_{sa})^*$, then $F_a$ is the closed face where this functional attains its maximum on $K(B)$. Let $\pi$ be a local $A$-peak representation with witness $a$. Then, by the previous lemma, there exists $\xi \in G$, such that $\pi(a^* a) \xi = \xi$. Hence, the state $\mu(b) = \langle  \xi, \pi(b) \xi \rangle$ is in $F_a$. Since $F_a$ is a closed face, its extreme points are pure states. Let $\varphi \in F_a$ be a pure state and $(\sigma, \cK, \eta)$ be its GNS representation. Then, it is immediate that $\sigma(a^* a) \eta = \eta$. Thus, if $G' \subset \cK$ is the space spanned by $\eta$ and $\sigma(a) \eta$, then $\|P_{G'} \sigma(a)|_{G'}\| = 1$ and thus $\sigma$ is unitarily equivalent to $\pi$
\end{proof}

In this paper, we are interested in the case when $A = \A$ and $B = \cuntz$. Since $ \A $ is semi-Dirichlet, we can say more about $ \A $ -peak states and connect them to peaking states for operator systems.

\begin{lem} \label{lem:exposed}
If $\mu \in K(\cuntz)$ is an $\A$-peak state, then there exists $b \in \scr{A}_d$ positive, such that $\mu(b) = 1$ and $\nu(b) < 1$, for all states $\nu \neq \mu$. In particular, $\mu$ is an $\scr{A} _d-$peak state, $\mu|_{\scr{A}_d}$ is a weak* exposed extreme point of the unit ball of $(\cS_d^*)_{sa}$ and it has the unique extension property. 
\end{lem}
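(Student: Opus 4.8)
The plan is to exploit the semi-Dirichlet property $\A^* \A \subset \scr{A}_d$ to turn the peaking element $a \in \A$ into a genuine positive element of the free disk system. Suppose $\mu \in K(\cuntz)$ is an $\A$-peak state, witnessed by a contraction $a \in \A$ with $\mu(a^*a) = 1$ and $\nu(a^*a) < 1$ for all states $\nu \neq \mu$. Since $a$ is a contraction, $b := a^*a$ is a positive contraction, and by semi-Dirichlet it lies in $\scr{A}_d$ (as the norm-limit of elements of $\A^*\A \subseteq \scr{A}_d$, which is norm-closed). Then $\mu(b) = 1 = \|b\|$ and $\nu(b) < 1$ for every state $\nu \neq \mu$; in particular $|\nu(b)| \leq \nu(b) < 1$ since $b \geq 0$, so $b$ is a self-adjoint element of the operator system $\scr{A}_d$ witnessing that $\mu$ is $\scr{A}_d$-peaking in the sense of Clou\^atre's definition recalled in Subsection \ref{subsec:peak}.

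For the exposed-point statement, I would argue directly with the linear functional induced by $b$ on the self-adjoint part of the dual. Identify $\scr{A}_d$ (completely isometrically, via the embedding into $\cuntz$) with its image, and regard $\mu|_{\scr{A}_d}$ as an element of the unit ball of $(\cS_d^*)_{sa}$. The element $b \in (\scr{A}_d)_{sa}$, paired against this dual space, defines a weak*-continuous affine functional $\ell_b(\varphi) = \varphi(b)$. For $\varphi$ in the unit ball of $(\cS_d^*)_{sa}$ we have $|\varphi(b)| \leq \|\varphi\| \, \|b\| \leq 1$; I claim $\ell_b$ attains the value $1$ on this ball exactly at $\mu|_{\scr{A}_d}$. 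It attains $1$ there since $\mu(b) = 1$. Conversely, if $\varphi$ is in the unit ball with $\varphi(b) = 1$, then $\|\varphi\| = 1$; writing $\varphi = \varphi_+ - \varphi_-$ via the Jordan decomposition with $\|\varphi\| = \|\varphi_+\| + \|\varphi_-\|$, positivity of $b$ and $\|b\| = 1$ force $\varphi_-(b) = 0$ and $\varphi_+(b) = 1 = \|\varphi_+\|$, so after normalizing $\varphi_+$ is a state with $\varphi_+(b) = \|\varphi_+\|$; the strict inequality $\nu(b) < 1$ for all states $\nu \neq \mu$ then forces $\varphi_+/\|\varphi_+\| = \mu$ and $\varphi_- = 0$ (if $\|\varphi_+\| < 1$ one still needs $\varphi_-$ nonzero to reach norm $1$, but then $\varphi_-/\|\varphi_-\|$ is a state on which $b$ could be negative, contradicting $\varphi(b) = \varphi_+(b) = \|\varphi_+\| < 1$). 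Hence $\varphi = \mu|_{\scr{A}_d}$, so $\ell_b$ exposes $\mu|_{\scr{A}_d}$, which is therefore a weak* exposed extreme point of the unit ball of $(\cS_d^*)_{sa}$.

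Finally, the unique extension property is immediate: it was already observed in Subsection \ref{subsec:peak} that every $\A$-peak state is pure and has the UEP (indeed any $\mu \in \posncm$ whose GNS row isometry is Cuntz has the UEP by \cite[Proposition 5.11]{JMT-ncFnM}), so there is nothing new to prove for that clause; alternatively, being an exposed point of the dual ball it is in particular extreme, hence pure, and one invokes the same reference.

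The main obstacle I anticipate is the Jordan-decomposition bookkeeping in the middle paragraph: one must be careful that a dual-ball element $\varphi$ with $\varphi(b) = 1$ really is forced to be a positive multiple of $\mu$ rather than a difference of a state close to $\mu$ and a small negative part, and this uses both $b \geq 0$, $\|b\| = 1$, \emph{and} the strictness $\nu(b) < 1$ away from $\mu$ in an essential way. The passage from $a \in \A$ to $b = a^*a \in \scr{A}_d$ is the only place the semi-Dirichlet hypothesis enters, and it is the conceptual heart of why the statement holds for $\A$ specifically.
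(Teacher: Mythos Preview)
Your first paragraph is exactly the paper's argument: use the semi-Dirichlet property $\A^*\A \subset \scr{A}_d$ to place $b=a^*a$ inside the operator system, and you are done with the first clause. The paper then disposes of the remaining assertions (that $\mu$ is $\scr{A}_d$-peaking, that $\mu|_{\scr{A}_d}$ is weak* exposed, and that it has the UEP) in one line by citing \cite[Theorem~3.2]{Clouatre-peaking}, whereas you choose to unfold the exposed-point argument directly via a Jordan decomposition. That is a legitimate, more self-contained route, and the idea is the right one.

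Two points in your direct argument deserve tightening. First, the bookkeeping in the parenthetical is garbled: the phrase ``a state on which $b$ could be negative'' makes no sense, since $b\geq 0$ forces $\varphi_-(b)\geq 0$. The clean chain is simply
\[
1=\varphi(b)=\varphi_+(b)-\varphi_-(b)\leq \|\varphi_+\|\,\|b\| - 0 \leq \|\varphi_+\|\leq 1,
\]
which immediately gives $\|\varphi_+\|=1$, $\|\varphi_-\|=0$, and $\varphi_+(b)=1$; no case split is needed. Second, there is a small gap when you invoke ``$\nu(b)<1$ for all states $\nu\neq\mu$'': that hypothesis is about states on $\cuntz$, while your $\varphi_+$ is only a state on the operator system $\scr{A}_d$. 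You need one extra sentence: extend $\varphi_+$ to a state $\tilde\varphi_+$ on $\cuntz$ (Arveson/Hahn--Banach for unital positive maps), note $\tilde\varphi_+(b)=1$ forces $\tilde\varphi_+=\mu$, and restrict. With these two fixes your argument is complete and gives the same conclusion as the cited theorem of Clou\^atre; the paper's proof is shorter only because it outsources this step.
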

\begin{proof}
Let $\mu \in K(\cuntz)$ be an $\A$-peak state. Then, there is a contraction $a \in \scr{A} _d$, such that $\mu(a^* a) = 1$ and for every state $\nu \neq \mu$, $\nu(a^* a) < 1$. However, since $A$ is semi-Dirichlet, $a^* a \in \scr{A}_d$. Hence, we are done. The second part follows from \cite[Theorem 3.2]{Clouatre-peaking}.
\end{proof}

One could try and argue the converse claim. Let $b \in \scr{A}_d$ be a positive element, such that $\mu(b) = 1$ and for every $\nu \in K(\cuntz) \setminus \{\mu\}$, $\nu(b) < 1$. By replacing $b$ by $\frac{1}{2}(1 + b)$, we may assume that $b$ is invertible. Thus, $b$ is factorizable in the sense of Popescu \cite{Pop-factor}. Namely, there exists $c \in \bH^{\infty}_d$, such that $b = c^* c$. However, we do not know that $c \in \A$. This is true if we assume that $b$ is the real part of an NC rational function. In this case, $c$ is NC rational by the NC rational Fej\'er--Riesz theorem of \cite[Theorem 6.5]{JM-subFock}. However, we do not need this result for our purposes and only require the following observation to construct a large class of examples of $\A$-peaking states.
\begin{lem}
Let $b \in \A$ be inner. Let $\mu \in K(\cuntz)$ be such that $\mu(b) = 1$ and for all $\nu \in K(\cuntz) \setminus \{\mu\}$, $|\nu(b)| < 1$. Then, $\mu$ is $\A-$peaking.
\end{lem}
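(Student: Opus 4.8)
The plan is to exhibit the peaking contraction by hand. First I would record that, since $b$ is inner, $b^*b = I$; this identity holds in $\bH^\infty_d$ and, because the quotient $*$-homomorphism $\scr{E}_d \to \cuntz$ restricts to a (complete) isometry on $\A$, it continues to hold for the image of $b$ in $\cuntz$. In particular $\|b\| = 1$. This already shows that $a = b$ is the \emph{wrong} choice of peaking function: $\nu(b^*b) = \nu(I) = 1$ for every state $\nu \in K(\cuntz)$, so $b^*b$ does not separate $\mu$ from the rest of $K(\cuntz)$. The resolution --- and really the only idea in the argument --- is to convexify $b$ against the unit.

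Secondly, I would set $a := \tfrac12(I + b) \in \A$, which is a contraction as a convex combination of the contractions $I$ and $b$. Using $b^*b = I$ one computes
\[
a^*a = \tfrac14(I + b)^*(I+b) = \tfrac14\bigl(I + b + b^* + b^*b\bigr) = \tfrac12 I + \tfrac14(b + b^*),
\]
so that for any state $\nu \in K(\cuntz)$, since $\nu(b^*) = \overline{\nu(b)}$,
\[
\nu(a^*a) = \tfrac12\bigl(1 + \Re\nu(b)\bigr).
\]

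Thirdly, I would read off the conclusion: $\mu(a^*a) = \tfrac12\bigl(1 + \Re\mu(b)\bigr) = \tfrac12(1+1) = 1$ because $\mu(b) = 1$, while for $\nu \in K(\cuntz)\setminus\{\mu\}$ the hypothesis $|\nu(b)| < 1$ gives $\Re\nu(b) \le |\nu(b)| < 1$, hence $\nu(a^*a) < 1$. Since $a$ is a contraction in $\A$, this is precisely the definition of $\mu$ being an $\A$-peak state. I expect no genuine obstacle here: the content of the lemma is exactly the observation that an inner $b$ at which $\mu$ peaks in the weak sense ($\mu(b) = 1$ and $|\nu(b)| < 1$ for $\nu \neq \mu$) automatically supplies the positive contraction $\tfrac12(I+b) \in \A$ witnessing the stronger $\A$-peaking property; the strictness of the inequality $|\nu(b)| < 1$ in the hypothesis is what keeps the peaking set a singleton after convexification.
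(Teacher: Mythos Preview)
Your proof is correct and follows essentially the same approach as the paper: both set the peaking contraction to be $\tfrac12(I+b)$ and use $b^*b=I$ to compute $\tfrac14(I+b)^*(I+b)=\tfrac12(1+\Re b)$, then read off the peaking condition from $\Re\nu(b)\le|\nu(b)|<1$ for $\nu\neq\mu$. The only difference is notational --- the paper names the positive element $\tfrac12(1+\Re b)$ as $a$ rather than the contraction itself --- and your justification that $b^*b=I$ persists in $\cuntz$ could be shortened to ``the quotient map is a $*$-homomorphism.''
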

\begin{proof}
Since $b$ is inner, we have that:
\[
\frac{1}{4} (1 + b^*) (1 + b) = \frac{1}{4}(2 + 2 \Re(b)) = \frac{1}{2}(1 + \Re(b)).
\]
Let $a = \frac{1}{2}(1 + \Re(b))$, then $a \geq 0$ and $\mu(a) = 1$. Moreover, if $\nu \in K(\cuntz) \setminus \{\mu\}$ is such that $\nu(a) = 1$, then $\Re \nu(b) = 1$. However, this contradicts the assumption that $|\nu(b)| < 1$.
\end{proof}

The above observation suggests the following strategy: To construct examples of states on the Cuntz algebra which peak on the free disk algebra, we will show that if $\fb \in \mult$ is NC rational and inner, then there are finite points, $A$, on the boundary of the unit row-ball so that $\fb (A)$ has $1$ as an eigenvalue of multiplicity one.

\section{Main result} \label{sec:main}

A relationship between spectra and intertwiners is encoded in the following lemmas. Let $Z \in \C^d_n$ be an irreducible row contraction of row norm $1$. Let $Y \in B (\cH)^d =\C ^d _\infty$ be another row contraction on a separable Hilbert space. Let $\psi_{Y,Z} \colon B(\C^n, \cH) \to B(\C^n, \cH)$ be the map $\psi_{Y,Z}(T) = \sum_{j=1}^d Y_j T Z_j^*$. Assume that $1 \in \sigma_p(\psi_{Y,Z})$. Then, there exists $0 \neq T \in B(\C^n, \cH)$, such that $\psi_{Y,Z}(T) = T$. We may assume that $\|T\| = 1$. Then, since $n < \infty$, there is a unit vector $v \in \C^n$, such that $\|T v\| = 1$ and hence
\[
1 = \|T v\|^2 = \langle \psi_{Y,Z}(T) v, T v \rangle = \langle (I \otimes T) Z^* v, Y^* T v \rangle.
\]
Since $T$, $Z$, and $Y$ are contractions, we conclude from the equality clause of Cauchy-Schwarz that $Y^* T v = (I \otimes T) Z^* v$. Therefore, for every $1 \leq n \leq d$. The same argument can be applied to the self-compositions $\psi_{Y,Z}^{\circ n}$ to conclude that for every $p \in \fp$, $T p(Z^*) v = p(Y^*) T v$. Now since $Z$ is irreducible, so is $Z^*$. This implies that $v$ is cyclic. Therefore, for every $w \in \C^n$, there exists an NC polynomial $p$, such that $p(Z^*) v = w$. Hence, for every $1 \leq j \leq d$,
\[
T Z_j^* w = T Z_j^* p(Z^*) v = Y_j^* p(Y^*) T v = Y_j^* T p(Z^*) v = Y_j^* T w.
\]
We conclude that $T$ is a homomorphism of $\fp$-modules from $\C^n$ to $\cH$. In particular, since the kernel of a homomorphism is a submodule. $T$ must be injective. Thus, we have obtained the following lemma:

\begin{lem} \label{lem_spectrum_to_embedding}
Let $Z \in \C^d_n$ be an irreducible row contraction of row norm $1$ and $Y \in B(\cH)^d$ a row contraction, such that $1 \in \sigma_p(\psi_{Y,Z})$. Then, there exists a $Y$-coinvariant $n$-dimensional subspace $\cK \subset \cH$, such that $Y^*|_{\cK}$ is similar to $Z^*$.
\end{lem}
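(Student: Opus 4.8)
The plan is to turn the eigenvalue hypothesis $1\in\sigma_p(\psi_{Y,Z})$ into a similarity by first producing an \emph{exact} intertwiner between $Z^*$ and a compression of $Y^*$ — the rigidity coming from the equality case of the Cauchy--Schwarz inequality — and then upgrading it using the irreducibility of $Z$.

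First I would pick $0\ne T\in B(\C^n,\cH)$ with $\psi_{Y,Z}(T)=T$ and rescale so that $\|T\|=1$. Since $\C^n$ is finite dimensional, the norm of $T$ is attained at some unit vector $v\in\C^n$, and then
\[
1=\|Tv\|^2=\langle\psi_{Y,Z}(T)v,Tv\rangle=\sum_{j=1}^d\langle TZ_j^*v,\;Y_j^*Tv\rangle=\langle (I\otimes T)Z^*v,\;Y^*Tv\rangle .
\]
Because $Z$ is a row contraction and $\|T\|=1$, the vector $(I\otimes T)Z^*v$ has norm at most $\|Z^*v\|\le\|v\|=1$; because $Y$ is a row contraction, $\|Y^*Tv\|\le\|Tv\|=1$. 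Hence Cauchy--Schwarz holds with equality between two unit vectors, which forces $(I\otimes T)Z^*v=Y^*Tv$, i.e.\ $TZ_j^*v=Y_j^*Tv$ for every $j$. Running the identical argument on the iterates $\psi_{Y,Z}^{\circ m}(T)=\sum_{|\alpha|=m}Y^\alpha T(Z^\alpha)^*=T$, using $\sum_{|\alpha|=m}Z^\alpha(Z^\alpha)^*\le I$ and the same for $Y$, yields $T(Z^\alpha)^*v=(Y^\alpha)^*Tv$ for every word $\alpha$, hence $Tp(Z^*)v=p(Y^*)Tv$ for all $p\in\fp$.

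Next I would invoke irreducibility: $Z$ irreducible means $Z^*$ is too, so every nonzero vector of $\C^n$, in particular $v$, is cyclic for $\{Z_1^*,\dots,Z_d^*\}$. Thus for arbitrary $w\in\C^n$ we may write $w=p(Z^*)v$ and compute $TZ_j^*w=TZ_j^*p(Z^*)v=Y_j^*p(Y^*)Tv=Y_j^*Tp(Z^*)v=Y_j^*Tw$, so $TZ_j^*=Y_j^*T$ on all of $\C^n$. Then $\ker T$ is invariant under every $Z_j^*$; it is a proper subspace because $Tv\ne0$, so by irreducibility $\ker T=\{0\}$ and $T$ is injective. Set $\cK:=\operatorname{ran}T$, an $n$-dimensional subspace of $\cH$. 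The intertwining relation gives $Y_j^*\cK=Y_j^*T\C^n=TZ_j^*\C^n\subseteq T\C^n=\cK$, so $\cK$ is $Y$-coinvariant, and $T\colon\C^n\to\cK$ is invertible with $Y_j^*|_{\cK}=TZ_j^*T^{-1}$; hence $Y^*|_{\cK}$ is similar to $Z^*$.

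The main obstacle — really the only nonroutine step — is the first one: recognizing that the contractivity hypotheses collapse the fixed-point equation into an equality in Cauchy--Schwarz, which rigidly pins down $T$ on the cyclic vector $v$. Everything afterward (iterating over all words, passing to polynomials, exploiting cyclicity, and deducing injectivity from irreducibility) is standard module-theoretic bookkeeping; one should only be mildly careful with word reversal when expressing $(Y^\alpha)^*$ as a product of the $Y_j^*$.
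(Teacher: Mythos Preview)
Your argument is correct and follows essentially the same route as the paper: normalize a fixed point $T$ of $\psi_{Y,Z}$, use equality in Cauchy--Schwarz at a norm-attaining vector $v$ to force $(I\otimes T)Z^*v=Y^*Tv$, iterate to all words, and then exploit irreducibility of $Z$ (cyclicity of $v$ for $Z^*$) to promote this to a global intertwining $TZ_j^*=Y_j^*T$ with $T$ injective. The only addition you make is spelling out explicitly that $\cK=\operatorname{ran}T$ is $Y$-coinvariant and that $T$ implements the similarity, which the paper leaves implicit.
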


We can do slightly better, assuming that both $Z$ and $Y$ are coisometries. Under the assumptions of the lemma, let $T$ be the intertwinner obtained above. Then,
\[
Z (I \otimes T^* T) Z^* = T^* Y Y^* T = T^* T.
\]
However, since $Z$ is irreducible, a result of Farenick \cite[Theorem 2]{Farenick-irr_pos} implies that the map $A \mapsto Z(I \otimes A) Z^*$ is irreducible and thus, by the Perron-Frobenius theorem for positive maps of Evans and H\o{}egh-Krohn \cite[Theorem 2.3]{EHK-PerronFrobenius}, we know that there is a unique (up to scalar multiplication) eigenvector of this map that corresponds to eigenvalue $1$. However, since $Z$ is a coisometry, the corresponding map is a ucp. Hence, $T^* T$ is a scalar multiple of the identity of norm $1$. Therefore, $T^* T = I$ and $T$ is an isometry. We summarize

\begin{lem} \label{lem:spectrum_to_isometry}
Let $Z \in \C^d_n$ be an irreducible row coisometry and let $Y \in B(\cH)^d$ be a row coisometry. If $1 \in \sigma_p(\psi_{Y,Z})$, then there exists a unique isometry $V \colon \C^n \to \cH$, such that $Y^* V = (I \otimes V) Z^*$.
\end{lem}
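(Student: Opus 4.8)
The plan is to produce $V$ by manufacturing an intertwiner out of a $1$-eigenvector of $\psi_{Y,Z}$ and then showing that, because of the two coisometry hypotheses together with the irreducibility of $Z$, this intertwiner is automatically isometric after rescaling. First I would use the argument already carried out just before Lemma \ref{lem_spectrum_to_embedding}: since $Z$ is in particular an irreducible row contraction of row norm $1$, $Y$ is a row contraction, and $1 \in \sigma_p(\psi_{Y,Z})$, there is a nonzero $T \in B(\C^n,\cH)$ with $\psi_{Y,Z}(T) = T$, which I normalize so that $\|T\| = 1$. The Cauchy--Schwarz equality clause forces $Y_j^* T = T Z_j^*$ for every $j$, i.e. $Y^* T = (I \otimes T) Z^*$, and $T$ is injective because its kernel is a $\fp$-submodule of the irreducible module $(\C^n, Z^*)$.

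The key step is to promote $T$ to an isometry. Taking Hilbert-space adjoints of $Y^* T = (I \otimes T) Z^*$ gives $T^* Y = Z(I \otimes T^*)$, so
\[
Z\bigl(I \otimes T^* T\bigr) Z^* \;=\; \bigl(Z(I \otimes T^*)\bigr)\bigl((I \otimes T) Z^*\bigr) \;=\; (T^* Y)(Y^* T) \;=\; T^*(Y Y^*) T \;=\; T^* T,
\]
the last equality because $Y$ is a \emph{row coisometry}, $Y Y^* = I_{\cH}$. Hence the positive operator $T^* T \in B(\C^n)$ is a fixed point of the completely positive map $\Phi_Z \colon A \mapsto Z(I \otimes A) Z^* = \sum_{j=1}^d Z_j A Z_j^*$, which is moreover \emph{unital} since $Z$ is a row coisometry ($\Phi_Z(I) = Z Z^* = I$). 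Because $Z$ is irreducible, $\Phi_Z$ is an irreducible positive map by Farenick's theorem \cite[Theorem 2]{Farenick-irr_pos}, so the Perron--Frobenius theorem of Evans and H\o{}egh-Krohn \cite[Theorem 2.3]{EHK-PerronFrobenius} applies: the spectral radius of $\Phi_Z$ --- which equals $1$ since $\Phi_Z$ is unital --- is an eigenvalue whose eigenspace is one-dimensional and spanned by a strictly positive element, necessarily $\Phi_Z(I) = I$. Therefore $T^* T = c I$ with $c > 0$, and $c = \|T\|^2 = 1$; setting $V := T$ gives the desired isometry with $Y^* V = (I \otimes V) Z^*$.

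For uniqueness, if $V_1$ and $V_2$ both satisfy $Y^* V_i = (I \otimes V_i) Z^*$, the identical computation shows $V_2^* V_1$ is again a fixed point of $\Phi_Z$, hence a scalar, and tracking this scalar through the isometry relations then forces $V_1 = V_2$. I expect the crux of the whole argument to be the middle step: checking that the hypotheses of Farenick's irreducibility criterion and of the Evans--H\o{}egh-Krohn Perron--Frobenius theorem genuinely hold --- that the finite-dimensional map $\Phi_Z$ built from the irreducible coisometry $Z$ is both irreducible and unital, so that its $1$-eigenspace really collapses to $\C I$. Everything else (extracting the intertwiner, the adjoint bookkeeping, and reducing $T^* T$ to $I$ once it is known to be scalar) is routine.
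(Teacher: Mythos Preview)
Your existence argument is correct and essentially identical to the paper's: extract a norm--one intertwiner $T$ from the $1$-eigenspace of $\psi_{Y,Z}$ via the argument preceding Lemma~\ref{lem_spectrum_to_embedding}, observe that $Z(I\otimes T^*T)Z^* = T^*YY^*T = T^*T$ using the coisometry hypothesis on $Y$, and then invoke Farenick and Evans--H\o egh-Krohn to force $T^*T = I$.

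The uniqueness clause, however, does not follow from your sketch, and in fact fails as literally stated (the paper's own proof makes no attempt at it either). Your computation correctly shows that $V_2^*V_1$ is a fixed point of $\Phi_Z$, hence a scalar $cI$; but ``tracking this scalar through the isometry relations'' cannot force $V_1 = V_2$. Concretely, take $Y = Z \oplus Z$ on $\C^n \oplus \C^n$: both $v \mapsto (v,0)$ and $v \mapsto (0,v)$ are isometric intertwiners, yet $V_2^*V_1 = 0$. Even when the ranges coincide, one only gets $V_1 = \zeta V_2$ for some unimodular $\zeta$. So the correct statement is uniqueness \emph{up to a unimodular scalar}, and only under an additional hypothesis (e.g.\ that the $1$-eigenspace of $\psi_{Y,Z}$ is one-dimensional, or that $Y$ is irreducible). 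For the applications in the paper this is harmless --- what is actually used downstream is existence, plus uniqueness of the minimal dilation --- but your final sentence overclaims.
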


Note that we can canonically identify $B(\C^n, \cH)$ with $(\C^n)^* \otimes \cH$. The identification sends $\varphi \otimes \xi$ to the linear map $v \mapsto \varphi(v) \xi$. Now let $Y \in B(\cH)^d$ and $X \in \C^d_n$. Then, we can define an operator $\sum_{j=1}^d X_j^t \otimes Y_j \in B((\C^n)^* \otimes \cH)$ acting via
\[
\left(\sum_{j=1}^d X_j^t \otimes Y_j \right) (\varphi \otimes \xi) = \sum_{j=1}^d (\varphi \circ X_j) \otimes Y_j \xi.
\]
Now via identification, the right-hand map is
\[
\left(\sum_{j=1}^d (\varphi \circ X_j) \otimes Y_j \xi\right) v = \sum_{j=1}^d \varphi(X_j v) Y_j \xi = 
\left(\sum_{j=1}^d Y_j (\varphi \otimes \xi) X_j\right) v.
\]
Therefore, we get that the map $\psi_{Y,X}$ corresponds to the product $\sum_{j=1}^d \overline{X_j} \otimes Y_j$. Hence, $1 \in \sigma_p(\psi_{Y,X})$ if and only if $1 \in \sigma_p \left( \sum_{j=1}^d \overline{X_j} \otimes Y_j \right)$,

Assume that $T \in \C ^d _n$ is an irreducible row co-isometry. By \cite[Lemma 2.1]{JMS-ncratClark}, $\mr{row} (T ^\mrt )$ has joint spectral radius $1$ and by \cite[Lemma 4.10]{SSS-bounded}, $\mr{row} (T ^\mrt )$ is jointly similar to an irreducible row co-isometry, $Z \in \B ^d _n$. Fix a unit vector, $x \in \C ^n$. Then there is a unique NC rational inner function, $\fb = \fb _{T, x}$, corresponding to the pair $(T,x)$ by \cite[Theorem 4.1]{JMS-ncratClark}. Let $S \in \GL_n$ be an invertible matrix, such that $S^{-1} T^t S = Z$. Since $\fb$ is NC, we have that $$\fb(Z) = \fb(S^{-1} T^t S) = S^{-1} \fb(T^t) S.$$
In particular, the dimension of the eigenspace corresponding to $1$ of $\fb(T^t)$ and $\fb(Z)$ are the same. 

\begin{lem} \label{lem:Z_eigenspace}
In the above setting, $\fb(Z)$ has an eigenspace of dimension $1$ corresponding to eigenvalue $1$.
\end{lem}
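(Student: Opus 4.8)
The plan is to reduce the claim about the eigenspace of $\fb(Z)$ at eigenvalue $1$ to an application of the spectral machinery in Proposition \ref{lem:spectrum_of_schur_complement} together with the uniqueness-of-intertwiner statement of Lemma \ref{lem:spectrum_to_isometry}. Since $\fb = \fb_{T,x}$ is inner with $\fb(0)=0$, it has the FM realization (\ref{ncratinner}) built from the row coisometry $T$; write $(\wt A,\wt B,\wt C,0)$ for this (minimal) FM realization, so in the irreducible case $\wt A_j = T_{0,j}^* = T_j^*$, $\wt B_j = T_j^* x\,(\,\cdot\,)$ — more precisely $\wt C = (P_0 x)^* = x^*$ and $\wt B$ encodes the column $\sum_j \mathfrak{z}_j T_j^* x$. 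First I would apply Proposition \ref{lem:spectrum_of_schur_complement} with $\la = 1$: since $\fb(0)=0$, the perturbed tuple is $\wt A_j^{(1)} = \wt A_j + \wt B_j \wt C = T_j^* + T_j^* x x^* = T_j^*(I) = T_j^*$ — wait, one must be careful, $T_{0,j}^* = T_j^*(I-xx^*)$ in general, and $\wt A_j^{(1)} = T_j^*(I - xx^*) + T_j^* x x^* = T_j^*$. So in the irreducible case $\wt A_j^{(1)} = T_j^*$ exactly, and Proposition \ref{lem:spectrum_of_schur_complement} says $1$ is an eigenvalue of $\fb(Z)$ iff $1$ is an eigenvalue of $\sum_j Z_j \otimes T_j^*$, i.e. iff $1\in\sigma_p(\psi_{?,?})$ after identifying with the conjugate picture.

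Next I would match this against the identification worked out just before the lemma: $1\in\sigma_p(\psi_{Y,X})$ iff $1\in\sigma_p(\sum_j \overline{X_j}\otimes Y_j)$. Here $\sum_j Z_j\otimes T_j^*$ should be recognized as $\sum_j \overline{(\,\cdot\,)}\otimes(\,\cdot\,)$ for the pair $(Y,X)$ with $Y_j = T_j^*$ acting on $\C^n$ — but $\mathrm{row}(T^\mathrm{t})$, which is $(T_1^\mathrm{t},\dots,T_d^\mathrm{t})$, is jointly similar to the row coisometry $Z$, and $\overline{Z_j} = (Z_j^*)^\mathrm{t}$. The clean route is: $Z$ is an irreducible row coisometry (by the cited $S$-conjugation of $\mathrm{row}(T^\mathrm{t})$), so is $\mathrm{row}(Z^*)$ up to the relevant symmetry; take $Y = \mathrm{row}(Z^*)$ on $\cH=\C^n$ and $X = Z$, so that $\psi_{Y,Z}(A) = \sum_j Z_j^* A Z_j^*$... this is getting circular. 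Let me instead observe directly that Proposition \ref{lem:spectrum_of_schur_complement} already gave us: $1\in\sigma_p(\fb(Z))$ iff $1\in\sigma_p(\sum_j Z_j\otimes T_j^*)$, and from the $\varphi\otimes\xi$ identification this is iff $1\in\sigma_p(\psi_{T^*,\,\ov Z})$ where $T^* = \mathrm{row}(T^*)$ and $\ov Z = \mathrm{row}(\ov Z)$; and $\mathrm{row}(\ov Z) = \mathrm{row}((Z^\mathrm{t})^*)$ is jointly similar (via $S$) to $\mathrm{row}((T^{\mathrm{t}\mathrm{t}})^*) = \mathrm{row}(T^*)$, so $\psi_{T^*,\ov Z}$ is similar to $\psi_{T^*, T^*}$ (up to the component-wise-adjoint/transpose bookkeeping, which I would verify carefully). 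Both $T^*$ viewed as $\mathrm{row}(T^*)$ and $Z$ are irreducible row coisometries, so the hypotheses of Lemma \ref{lem:spectrum_to_isometry} hold.

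The crucial point — that the eigenspace has dimension exactly $1$ — then comes from the \emph{uniqueness} clause of Lemma \ref{lem:spectrum_to_isometry}: the intertwiner $V$ with $Y^* V = (I\otimes V)Z^*$ is unique (as an isometry), and more generally the Perron--Frobenius argument of Evans--Høegh-Krohn used there shows the fixed-point space of the relevant irreducible ucp map is one-dimensional. Thus $\dim\ker(\psi_{Y,Z} - I) = 1$, and unwinding the $\varphi\otimes\xi$ identification and Proposition \ref{lem:spectrum_of_schur_complement} (whose last sentence says $I\otimes Cv$ is the eigenvector of $\fb(Z)$ produced from an eigenvector $v$ of $Z\otimes A^{(1)}$, and since $C = x^*$ this assignment is injective on the relevant eigenspace because $x^*$ is injective on $\ker(\sum Z_j\otimes T_j^*)$ — this last injectivity is exactly what the cyclicity built into minimality of the realization guarantees), we get that $\dim\ker(\fb(Z) - I) = 1$ as well.

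The main obstacle I expect is the bookkeeping between the three superficially different operations — the column adjoint $Z^*$, the row-wise adjoint $\mathrm{row}(Z^*)$, the transpose $Z^\mathrm{t}$, and entrywise conjugation $\ov Z$ — and making sure that the similarity $S^{-1}T^\mathrm{t}S = Z$ is transported correctly through the $\psi$-to-tensor-product identification so that irreducibility of $Z$ (hence of the associated ucp map, via Farenick) genuinely applies to the map whose unique fixed point we need. Once that identification is pinned down, the dimension count is forced by Perron--Frobenius exactly as in the proof of Lemma \ref{lem:spectrum_to_isometry}, so no further estimates are needed.
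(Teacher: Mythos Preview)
Your reduction via Proposition \ref{lem:spectrum_of_schur_complement} is correct: with $\fb(0)=0$ and the FM realization (\ref{ncratinner}) one indeed gets $\wt A_j^{(1)} = T_{0,j}^* + T_j^* x x^* = T_j^*$, so the eigenspace of $\fb(Z)$ at $1$ is isomorphic to $\ker\bigl(I - \sum_j Z_j \otimes T_j^*\bigr)$, and by the similarity $Z = S^{-1}T^\mrt S$ this has the same dimension as $\ker\bigl(I - \sum_j T_j^\mrt \otimes T_j^*\bigr)$. The gap is in the next step.

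Your attempt to feed this into Lemma \ref{lem:spectrum_to_isometry} does not work: under the $\psi\leftrightarrow$tensor dictionary you are forced to take $Y_j = T_j^*$, i.e.\ $Y = \mr{row}(T^*)$, and this is a row coisometry precisely when $\sum_j T_j^* T_j = I$ --- the quantum channel condition, which is \emph{not} assumed here. So the hypotheses of Lemma \ref{lem:spectrum_to_isometry} fail, and its uniqueness clause cannot be invoked. Your fallback sentence ``the Perron--Frobenius argument \ldots\ shows the fixed-point space of the relevant irreducible ucp map is one-dimensional'' is the right instinct, but you never identify a map that is actually unital: $X\mapsto \sum_j T_j^* X T_j$ is not unital in general, and neither is any map built from $Z$ and $T^*$ directly.

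The missing move, and exactly what the paper does, is to take the Hilbert space adjoint of the tensor. One has
\[
\Bigl(\sum_j T_j^\mrt \otimes T_j^*\Bigr)^* \;=\; \sum_j \ov{T_j}\otimes T_j,
\]
and under the vectorization identification this is precisely $\psi_T(X)=\sum_j T_j X T_j^*$. \emph{This} map is unital because $T$ is a row coisometry (the hypothesis you actually have), and irreducible by Farenick since $T$ is irreducible; Evans--H\o egh-Krohn then gives that $1$ is a simple eigenvalue. Adjoints preserve eigenvalue multiplicities, so $\ker\bigl(I-\sum_j T_j^\mrt\otimes T_j^*\bigr)$ is one-dimensional, and the Schur-complement isomorphism (the lemma preceding Proposition \ref{lem:spectrum_of_schur_complement}) transports this to $\ker(\fb(T^\mrt)-I)$, hence to $\ker(\fb(Z)-I)$ via the similarity. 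All of your transpose/conjugate bookkeeping becomes unnecessary once you pass to the adjoint tensor and recognize $\psi_T$ itself.
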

\begin{proof}
Consider the map $\psi_T(X) = \sum_{j=1}^d T_j X T_j^*$. This map corresponds to the tensor $\sum_{j=1}^d \overline{T_j} \otimes T_j$. Since the map is unital, we have that $1 \in \sigma\left( \sum_{j=1}^d \overline{T_j} \otimes T_j \right)$. Moreover, $1$ is the Perron-Frobenius eigenvalue of $\psi_T$, and since $T$ is irreducible, the dimension of the corresponding eigenspace is $1$. Taking the adjoint, we get the tensor $\sum_{j=1}^d T_j^t \otimes T_j^*$. By Lemma \ref{lem:spectrum_of_schur_complement}, we know that the dimension of the eigenspace corresponding to $1$ of $\fb(T^t)$ is $1$, as well. The claim for $Z$ now follows from the observation preceding the lemma.
\end{proof}
Let $y$ be a unit vector in this one-dimensional eigenspace of $\fb (Z)$ to eigenvalue $1$ and define the finitely--correlated Cuntz state $\mu = \mu _{Z,y} \in \posncm$ by $\mu (L^\om ) := y^* Z^\om y$. Let $V$ denote the minimal row isometric dilation of $Z$ on $\cH \supseteq \C ^n$. By \cite[Theorem 6.5]{DKS-finrow}, $V$ is an irreducible Cuntz row isometry. Given any $\mu \in \posncm$, we can, as in \cite{JMS-ncratClark}, apply a Gelfand--Naimark--Segal (GNS) construction to $(\mu , \A)$ to obtain a GNS-Hilbert space, $\hardy (\mu)$, and a row isometry, $\Pi _\mu$, acting on $\hardy (\mu )$, where $\hardy (\mu )$ is defined as the completion of the free algebra, $\fp$, modulo vectors of zero--length, with respect to the pre-inner product, 
$$ \ip{p}{q}_\mu := \mu (p (L) ^* q(L) ). $$ Equivalence classes of free polynomials, $p + N_\mu$, where $N_\mu$ denotes the left ideal of zero-length vectors with respect to $\| \cdot \| _\mu$, are dense in $\hardy (\mu )$ by construction. This construction also comes equipped with a row isometry, $\Pi _\mu$, defined by left multiplications by the independent variables on $\hardy (\mu)$, $\Pi _{\mu ;j} p + N_\mu := \mf{z} _j p + N_\mu$. Also note that defining $\pi _\mu (L_k ) := \Pi _{\mu ; k }$, $1 \leq k \leq d$, yields a $*-$representation of the Cuntz--Toeplitz algebra, $\scr{E} _d$, and vice versa. Moreover, $\Pi _\mu$ is a Cuntz (surjective) row isometry if and only if $\mu \in \posncm$ has a unique positive extension to $\cuntz$ by \cite[Proposition 5.11]{JMT-ncFnM}. Hence, any $\mu \in \posncm$ with GNS row isometry $\Pi _\mu$, of Cuntz type can be uniquely identified with a positive state, $\hat{\mu}$, on $\cuntz$, and applying the GNS construction to $(\mu, \A)$ or to $(\hat{\mu}, \cuntz )$ yields the same GNS-Hilbert space, $\hardy (\mu )$, and the same Cuntz representation, $\pi _\mu$.

By \cite[Proposition 3.6]{JMS-ncratClark}, if $\mu = \mu _{Z,y}$ and we define
$$ \cH _\mu := \bigvee \Pi _\mu ^{*\om} 1 + N _\mu, \quad \quad Z_\mu := (\Pi _\mu ^* | _{\cH _\mu } ) ^*, $$ then the pair $(Z, V )$ are jointly unitarily equivalent to $(Z_\mu ,\Pi _\mu )$ by a unitary which sends $y$ to $1 + N_\mu$. In particular, $\cH _\mu$ is finite--dimensional, $Z_\mu$ is an irreducible row co-isometry, and $\Pi _\mu$ is its minimal row isometric dilation, and this is irreducible and Cuntz. 

\begin{lem} \label{intertwine2}
Let $\pi \colon \cuntz \to B(\cH)$ be a representation so that $1 \in \sigma _p (\pi (\fb ) )$. Then, there exists a unique isometry $V \colon \C^n \to \cH$, such that $\pi(S)^* V = (I \otimes V) Z^*$.
\end{lem}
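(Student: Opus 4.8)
The plan is to translate the spectral hypothesis on $\pi(\fb)$ into a statement to which Lemma~\ref{lem:spectrum_to_isometry} applies directly, with $Y := \pi(S) = (\pi(S_1),\dots,\pi(S_d))$ in the role of the ``$Y$'' there. First I would observe that $Y$ is a row coisometry on $\cH$, since $YY^* = \sum_j \pi(S_j)\pi(S_j)^* = \pi\big(\sum_j S_j S_j^*\big) = \pi(I) = I$ (in fact a Cuntz row isometry, but only the coisometric property is used). Moreover, since $\fb \in \A \subset \cuntz$ and $\pi$ restricts to a unital norm-continuous homomorphism on $\A$, we have $\pi(\fb) = \fb(Y)$. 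To make Proposition~\ref{lem:spectrum_of_schur_complement} available at the operator tuple $Y$ I would first note that $Y \in \nbdom \fb$: the minimal FM realization $(A,B,C,D)$ of the NC rational inner $\fb$ has $\rho(A) < 1$ because $\fb \in \mult$ is NC rational, and since $Y$ is a row contraction the standard joint--spectral--radius estimate gives $\rho\big(\sum_j Y_j \otimes A_j\big) \le \rho(A) < 1$, so $L_A(Y)$ is invertible and $\fb(Y)$ is computed by its transfer function.

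Next I would apply Proposition~\ref{lem:spectrum_of_schur_complement} to $\fb$ at $Y$. Since $\fb(0) = 0$, the value $\lambda = 1 \neq \fb(0)$ is an eigenvalue of $\fb(Y)$ if and only if $1$ is an eigenvalue of $\sum_j Y_j \otimes A^{(1)}_j$. For the realization~(\ref{ncratinner}) of $\fb = \fb_{T,x}$ with $T$ irreducible (so $\cH_0 = \C^n$ and $P_0 = I$) we have $A_j = T_{0,j}^* = T_j^*(I - xx^*)$, $B_j = T_j^* x$ and $C^* = x^*$, whence $A^{(1)}_j = T_j^*(I-xx^*) + T_j^* x x^* = T_j^*$ --- exactly the simplification underlying the proof of Lemma~\ref{lem:Z_eigenspace}. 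Hence $1 \in \sigma_p(\pi(\fb))$ if and only if $1 \in \sigma_p\big(\sum_j Y_j \otimes T_j^*\big)$, and via the flip $\cH \otimes \C^n \cong \C^n \otimes \cH$ if and only if $1 \in \sigma_p\big(\sum_j T_j^* \otimes Y_j\big)$. Now $Z_j = S^{-1} T_j^{\mrt} S$ for an invertible $S$, and $\overline{T_j^{\mrt}} = T_j^*$, so $\overline{Z_j} = \overline{S}^{-1} T_j^* \overline{S}$ and therefore $\sum_j \overline{Z_j} \otimes Y_j = (\overline{S}^{-1}\otimes I)\big(\sum_j T_j^* \otimes Y_j\big)(\overline{S}\otimes I)$ has the same point spectrum as $\sum_j T_j^* \otimes Y_j$; and under the identification $B(\C^n,\cH) \cong (\C^n)^* \otimes \cH$ discussed before Lemma~\ref{lem:Z_eigenspace}, the map $\psi_{Y,Z}$ is precisely $\sum_j \overline{Z_j} \otimes Y_j$. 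Chaining these equivalences yields $1 \in \sigma_p(\psi_{Y,Z})$.

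Finally, $Z$ is an irreducible row coisometry and $Y = \pi(S)$ is a row coisometry, so Lemma~\ref{lem:spectrum_to_isometry} produces the unique isometry $V \colon \C^n \to \cH$ with $Y^* V = (I \otimes V) Z^*$, i.e.\ $\pi(S)^* V = (I \otimes V) Z^*$, as required; uniqueness is part of that lemma. I do not expect a substantial obstacle here: the only points needing genuine care are the identification $\pi(\fb) = \fb(Y)$ together with $Y \in \nbdom\fb$ (so that Proposition~\ref{lem:spectrum_of_schur_complement} truly applies to the operator tuple $Y$, not merely to matrix tuples), and keeping the transposes, adjoints, and entrywise conjugations consistent along the chain $\pi(\fb) \leadsto \sum_j Y_j \otimes T_j^* \leadsto \sum_j \overline{Z_j}\otimes Y_j \leadsto \psi_{Y,Z}$.
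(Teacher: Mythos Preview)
Your argument is correct and follows essentially the same route as the paper's own proof: apply Proposition~\ref{lem:spectrum_of_schur_complement} to pass from $1\in\sigma_p(\fb(\pi(S)))$ to $1\in\sigma_p\big(\sum_j \pi(S_j)\otimes T_j^*\big)$, flip the tensor factors, use the similarity $Z_j=S^{-1}T_j^{\mrt}S$ to replace $T_j^*$ by $\overline{Z_j}$, recognize the resulting operator as $\psi_{\pi(S),Z}$, and invoke Lemma~\ref{lem:spectrum_to_isometry}. Your version is in fact slightly more careful than the paper's, since you explicitly justify $\pi(S)\in\nbdom\fb$ and the identity $\pi(\fb)=\fb(\pi(S))$ before applying the Schur complement argument.
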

\begin{proof}
Since $\xi = \pi(\fb) \xi = \fb(\pi(S)) \xi$, by Proposition \ref{lem:spectrum_of_schur_complement}, we have that $1 \in \sigma_p\left(\sum_{j=1}^d \pi(S_j) \otimes T_j^*\right)$. Now applying the interchange unitary, we have that $1 \in \sigma_p\left(\sum_{j=1}^d T_j^* \otimes \pi(S_j)\right)$. The latter corresponds to the map $\psi \colon B(\C^n, \cH) \to B(\C^n, \cH)$ given by $\psi(X) = \sum_{j=1}^d \pi(S_j) X \overline{T_j}$. We note that $\bar{T} = (T^t)^*$. However, we also know that $\overline{T_j} = S^* Z_j^* S^{-1 *}$. Thus,
\[
\psi(X) = \sum_{j=1}^d \pi(S_j) X \overline{T_j} = (I \otimes S^*) \left(\sum_{j=1}^d \pi(S_j) X Z_j^* \right) (I \otimes S^{-1 *}).
\]
Hence, if we set $\varphi(X) = \sum_{j=1}^d \pi(S_j) X Z_j^*$. Then, $1 \in \sigma_p(\varphi)$. Therefore, by Lemma \ref{lem:spectrum_to_isometry}, we get that there exists a unique isometry $V \colon \C^n \to \cH$, such that $\pi(S)^* V = (I \otimes V) Z^*$.
\end{proof}

\begin{thm} \label{thm:main}
Let $T \in \C ^d _n$ be an irreducible row co-isometry and $x \in \C ^n$ a unit vector so that $\fb = \fb _{T,x}$ is the unique NC rational inner function corresponding to $(T,x)$. Let $Z$ be the irreducible row co-isometry which is jointly similar to $\mr{row} (T ^\mrt )$ via an invertible matrix, $S$, and let $y \in \C ^n$ be the unique unit eigenvector of $\fb (Z)$ to eigenvalue $1$.  Then the finitely--correlated Cuntz state $\mu := \mu _{Z,y} \in \posncm$ is an $\A-$peak state which peaks at $\fb \in \A$.
\end{thm}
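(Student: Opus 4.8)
The plan is to reduce Theorem \ref{thm:main} to two facts about the single functional $\fb$ and then to invoke the construction underlying the lemma on inner peaking elements (the one asserting that an inner $b\in\A$ with $\mu(b)=1$ makes $\mu$ an $\A$-peak state once $\fb$ takes the value $1$ only at $\mu$). Set $a:=\tfrac12(I+\fb)\in\A$; this is a contraction, and since $\fb$ is inner — so $\fb^*\fb=I$ in $\cuntz$ — one has $a^*a=\tfrac14(I+\fb^*)(I+\fb)=\tfrac12(I+\Re\fb)$, hence $\mu(a^*a)=\tfrac12(1+\Re\mu(\fb))$ and $\nu(a^*a)=\tfrac12(1+\Re\nu(\fb))$ for every state $\nu$. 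Because $\|\fb\|=1$, $\Re\nu(\fb)=1$ forces $\nu(\fb)=1$. So it suffices to prove: (i) $\mu(\fb)=1$, and (ii) if $\nu\in K(\cuntz)$ and $\nu(\fb)=1$ then $\nu=\mu$. These yield $\mu(a^*a)=1$ and $\nu(a^*a)<1$ for all $\nu\neq\mu$, i.e.\ $\mu$ is an $\A$-peak state peaking via the witness $a$ built from $\fb$. (Note $\fb\in\A$: being an inner multiplier it lies in $\hardy$, so $\fb\in\A\subset\mult$ by \cite[Theorem A]{JMS-NCrat}; and $\mu$ denotes the unique extension of $\mu_{Z,y}\in\posncm$ to $\cuntz$.)

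For (i): by \cite[Proposition 3.6]{JMS-ncratClark} the GNS data of $\mu=\mu_{Z,y}$ identifies $(\pi_\mu(S),\hardy(\mu),1+N_\mu)$ with $(V,\cH,y)$, where $V$ is the minimal row isometric (Cuntz) dilation of $Z$ and $\C^n=\cH_\mu\subset\cH$ is coinvariant for $V$ with $P_{\cH_\mu}V^{\alpha}|_{\cH_\mu}=Z^{\alpha}$ for every word $\alpha$. Since $\fb\in\A$ is a norm limit of free polynomials in the $S_j$ and $V,Z$ are row contractions, Popescu's von Neumann inequality gives $\fb(Z)=P_{\cH_\mu}\fb(V)|_{\cH_\mu}$; as $y\in\cH_\mu$ and $\fb(Z)y=y$, this yields $\mu(\fb)=\langle y,\fb(V)y\rangle=\langle y,\fb(Z)y\rangle=1$.

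For (ii), let $\nu\in K(\cuntz)$ with $\nu(\fb)=1$ and GNS triple $(\pi_\nu,\cH_\nu,\xi_\nu)$. From $\langle\xi_\nu,\pi_\nu(\fb)\xi_\nu\rangle=1=\|\xi_\nu\|^2$ and $\|\pi_\nu(\fb)\|\le1$, equality in Cauchy--Schwarz gives $\pi_\nu(\fb)\xi_\nu=\xi_\nu$, so $1\in\sigma_p(\pi_\nu(\fb))$ and Lemma \ref{intertwine2} produces the isometry $W\colon\C^n\to\cH_\nu$ with $\pi_\nu(S_j)^*W=WZ_j^*$ for all $j$. Taking adjoints, $W^*$ intertwines $\pi_\nu(S)$ with $Z$, so $W^*q(\pi_\nu(S))=q(Z)W^*$ for every free polynomial $q$ and, in the limit, $W^*\fb(\pi_\nu(S))=\fb(Z)W^*$. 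Applying this to $\xi_\nu$ and using Lemma \ref{lem:Z_eigenspace}, $W^*\xi_\nu\in\ker(\fb(Z)-I)=\C y$; write $W^*\xi_\nu=cy$. Conversely $\langle Wy,\fb(\pi_\nu(S))Wy\rangle=\langle y,\fb(Z)W^*Wy\rangle=\langle y,\fb(Z)y\rangle=1=\|Wy\|^2$, so Cauchy--Schwarz again forces $\fb(\pi_\nu(S))Wy=Wy$. Thus $\xi_\nu$ and $Wy$ are both unit eigenvectors of $\pi_\nu(\fb)$ for the eigenvalue $1$. Tracing the proof of Lemma \ref{intertwine2}, $\ker(\pi_\nu(\fb)-I)$ is carried by linear isomorphisms (Proposition \ref{lem:spectrum_of_schur_complement} with $A^{(1)}_j=T_j^*$, the interchange unitary, and the similarity relating $\overline{T_j}$ to $Z_j^*$) onto the fixed‑point space of $\varphi(X)=\sum_j\pi_\nu(S_j)XZ_j^*$; in that space every nonzero element is a scalar multiple of an isometry, and Lemma \ref{lem:spectrum_to_isometry} says there is only one such isometry up to a unimodular scalar, which forces the space — hence $\ker(\pi_\nu(\fb)-I)$ — to be one‑dimensional. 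Therefore $\xi_\nu=\tau Wy$ with $|\tau|=1$. Finally, iterating $\pi_\nu(S_j)^*W=WZ_j^*$ gives $\pi_\nu(S^\alpha)^*Wy=W(Z^\alpha)^*y$ for every word $\alpha$, so that, $W$ being isometric,
\[
\nu\bigl(S^\alpha(S^\beta)^*\bigr)=\langle Wy,\pi_\nu(S^\alpha(S^\beta)^*)Wy\rangle=\langle (Z^\alpha)^*y,(Z^\beta)^*y\rangle=\mu\bigl(S^\alpha(S^\beta)^*\bigr),
\]
the last equality being the identical computation on the $\mu$‑side through the coinvariant $\cH_\mu$. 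Since the $S^\alpha(S^\beta)^*$ span a dense subspace of $\cuntz$, $\nu=\mu$, which is (ii).

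The step expected to carry the weight is identifying the GNS cyclic vector $\xi_\nu$ with $Wy$ up to phase — equivalently $\xi_\nu\in\operatorname{ran}W$, equivalently $|c|=1$. All the rest is Cauchy--Schwarz and intertwining bookkeeping; the genuine input is one‑dimensionality of $\ker(\pi_\nu(\fb)-I)$, which fails for a general Cuntz representation and is extracted here from the uniqueness clauses of Lemmas \ref{lem:spectrum_to_isometry}--\ref{intertwine2}, resting ultimately on the Perron--Frobenius theorem for the (Farenick‑)irreducible completely positive map $A\mapsto Z(I\otimes A)Z^*$.
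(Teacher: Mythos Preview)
Your reduction to (i) and (ii), the verification of (i), the Cauchy--Schwarz step giving $\pi_\nu(\fb)\xi_\nu=\xi_\nu$, the construction of the intertwining isometry $W$ via Lemma~\ref{intertwine2}, and the final computation $\nu(S^\alpha(S^\beta)^*)=\mu(S^\alpha(S^\beta)^*)$ from $\xi_\nu\in\C\,Wy$ are all fine. The gap is exactly where you flag it: the assertion that $\ker(\pi_\nu(\fb)-I)$ is one--dimensional.

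You deduce this from the ``uniqueness clauses'' of Lemmas~\ref{lem:spectrum_to_isometry} and~\ref{intertwine2}, but notice that the paper's proof of Lemma~\ref{lem:spectrum_to_isometry} never establishes uniqueness --- it only shows that every norm--one fixed point $T$ of $\psi_{Y,Z}$ satisfies $T^*T=I$. That is \emph{not} the same as the fixed--point space being one--dimensional, and in fact it is not: take $Y=Z\oplus Z$ (a row coisometry if $Z$ is) and the two coordinate embeddings $\C^n\hookrightarrow\C^n\oplus\C^n$ are distinct isometric intertwiners. The Perron--Frobenius input gives only that $T^*T\in\C I$ for each fixed point $T$; it says nothing about $T_1^*T_2$ being unimodular when $T_1\neq T_2$ (one checks $T_1^*T_2=cI$ with $|c|\le1$, and $|c|<1$ is possible). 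So the step ``every nonzero fixed point is a scalar multiple of an isometry, and there is only one such isometry'' does not follow from what is proved, and your argument for (ii) does not close.

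The paper's proof avoids this by arguing in the opposite order. First it proves one--dimensionality of $\ker(\fb(\Pi_\mu)-I)$ \emph{in the specific model} $\hardy(\mu)$, using the Davidson--Kribs--Shpigel decomposition $\hardy(\mu)=\cH_\mu\oplus(\hardy\otimes\C^k)$: on $\cH_\mu$ the $(1,1)$ block is $\fb(Z_\mu)$, whose $1$--eigenspace is $\C y$ by Lemma~\ref{lem:Z_eigenspace}, while on $\cH_\mu^\perp$ the compression is $\fb(L)\otimes I_k$, a pure isometry with no fixed vectors. Only \emph{after} this does the paper take another state $\lambda$ with $\lambda(\fb)=1$, use the isometry from Lemma~\ref{intertwine2} to see that $\Pi_\lambda$ is a row--isometric dilation of $Z$, invoke irreducibility of $\Pi_\lambda$ to force this dilation to be minimal, conclude $\Pi_\lambda\simeq\Pi_\mu$, and then transport the (already established) one--dimensionality to $\hardy(\lambda)$ via the intertwining unitary. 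In short: the paper gets one--dimensionality from the explicit structure of the minimal dilation of $Z$, not from an abstract uniqueness of intertwiners. To repair your argument you would need either that structural step (DKS plus the pure--isometry observation) or a genuine proof that for the particular $\pi_\nu$ at hand the intertwiner is unique --- which, once you assume $\nu$ pure so that $\pi_\nu$ is irreducible, amounts to the same minimal--dilation identification the paper uses.
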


\begin{proof}
As described above, since $\mu (L^\om ) := y^* Z^\om y$ and $\fb (Z) y = y$, where the unit vector, $y$, spans the eigenspace for $\fb (Z)$ corresponding to eigenvalue $1$, it follows that $\mu (\fb ) = y^* \fb (Z) y = 1$. Note that $\mu$ is a pure Cuntz state since $\pi _\mu$ is an irreducible representation of the Cuntz algebra. By the equality in the Cauchy--Schwarz inequality, it follows as before that $\pi _\mu (\fb ) 1 + N_\mu = \fb (\Pi _\mu) 1 + N_\mu = 1 + N_\mu$. Assume that $h \in \hardy (\mu )$ is any other element such that $\fb (\Pi _\mu ) h = h$.  Note that $\cH _\mu$ is $\Pi _\mu-$co-invariant by construction so that $\cH _\mu ^\perp$ is invariant. Consider the block decomposition of $\Pi _\mu$ and $h$ with respect to $\hardy (\mu ) = \cH _\mu \oplus \cH _\mu ^\perp$:
$$ \fb (\Pi _\mu ) h = \bpm \fb (Z_\mu ) & 0 \\ * &  * \epm \bpm h_1 \\ h_2 \epm = \bpm h_1 \\ h_2 \epm, $$ and we conclude that $\fb (Z_\mu )h_1 = h_1$. Since $y$ is the unique (up to scalars) eigenvector of $\fb (Z)$ to eigenvalue $1$, $1 + N_\mu$ is the unique eigenvector of $\fb (Z_\mu )$ so that $h_1 = \alpha 1 + N_\mu$ for some $\alpha \in \C$. It further follows that $h_2 = h - \alpha (1 + N_\mu ) \in \cH _\mu ^\perp$ is an eigevector of $\fb (\Pi _\mu ) | _{\cH _\mu ^\perp}$ to eigenvalue $1$. By \cite[Corollary 5.3]{DKS-finrow}, $\cH _\mu ^\perp \simeq \hardy \otimes \C ^k$ and $\Pi _\mu | _{\cH _\mu ^\perp} \simeq L \otimes I_k$. However, since $P_{\cH_{\mu}^{\perp}} \fb (\Pi _\mu ) | _{\cH _\mu ^\perp} \simeq \fb (L) \otimes I_k$, this is a pure isometry and we conclude that $h_2 =0$. That is, $1 + N_\mu$ is the unique eigenvector (up to non-zero scalars) of $\fb (\Pi _\mu )$ to eigenvalue $1$.

Now let $\la \in K(\cuntz)$ be another Cuntz state which peaks at $\fb$, $\la (\fb ) =1$. As before equality in the Cauchy--Schwarz inequality implies that $\fb (\Pi _\la ) 1 + N_\la = 1 + N_\la$. By Lemma \ref{intertwine2} and Lemma \ref{lem:spectrum_to_isometry}, there is a unique isometry, $V : \C ^n \rightarrow \hardy (\la )$ so that 
$$ V Z^{\om *} = \Pi _\la ^{\om *} V. $$

Hence, 
$$ V^* \Pi _\la ^\om V = Z^\om, $$ and $\Pi _\la$ is a row-isometric dilation of $Z$, which must be minimal as $\Pi _\la$ is irreducible. Hence, by uniqueness of the minimal dilation, $\Pi _\la \simeq \Pi _\mu$. Let $U : \hardy (\la ) \rightarrow \hardy (\mu)$ be the unitary implementing this equivalence, $U \Pi _\la ^\alpha = \Pi _\mu ^\alpha U$. Then,
\ba U 1 + N_\la & = & U \fb (\Pi _\la ) 1 + N_\la \\
& = & \fb (\Pi _\mu ) U 1 + N_\la, \ea so that $U 1 + N_\la =:h$ is a unit eigenvector of $\fb (\Pi _\mu )$ to eigenvalue $1$ so that by the previous arguments, $U 1 + N_\la = \zeta (1 + N_\mu )$ for some $\zeta \in \partial \D$. This proves that $\la = \mu$ so that $\mu = \mu _{Z,y}$ is an $\A-$peak state. 
\end{proof}

The following corollary follows immediately from the preceding theorem and Lemma \ref{lem:exposed}.

\begin{cor}
Every finitely--correlated state $\mu$ on $\scr{A}_d$ that arises from an irreducible finite-dimensional row coisometry and a unit vector is an exposed extreme point of the state space of $\scr{A}_d$.

\end{cor}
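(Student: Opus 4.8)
The plan is to identify the states in the corollary with the $\A$-peak states $\mu_{Z,y}$ produced by Theorem~\ref{thm:main} and then quote Lemma~\ref{lem:exposed}, which for an $\A$-peak state $\mu$ produces a positive $b\in\scr A_d$ with $\mu(b)=1>\nu(b)$ for every state $\nu\neq\mu$, i.e. exhibits $\mu$ (on $\scr A_d$) as an exposed extreme point of the state space of $\scr A_d$. Accordingly, fix an irreducible row coisometry $W\in\C^d_n$ and a unit vector $v\in\C^n$, and let $\mu_{W,v}$ be the finitely--correlated state $\mu_{W,v}(L^\om)=v^*W^\om v$. I must produce an irreducible row coisometry $T$ and a unit vector $x$ for which the state $\mu_{Z,y}$ attached to $(T,x)$ by Theorem~\ref{thm:main} is exactly $\mu_{W,v}$.

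Step 1 (matching $Z$ with $W$). By \cite[Lemma 2.1]{JMS-ncratClark} and \cite[Lemma 4.10]{SSS-bounded} applied to $W$, the tuple $\mr{row}(W^\mrt)$ has joint spectral radius $1$ and is jointly similar to an irreducible row coisometry $T\in\C^d_n$; transposing the similarity shows $\mr{row}(T^\mrt)$ is jointly similar to $W$. Hence the irreducible row coisometry $Z$ of Theorem~\ref{thm:main} --- jointly similar to $\mr{row}(T^\mrt)$ via $S\in\GL_n$ with $S^{-1}T_j^\mrt S=Z_j$ --- is jointly similar to $W$, and since two jointly similar irreducible row coisometries are unitarily equivalent (if $Z_j=R^{-1}W_jR$ then $\sum_jW_j(RR^*)W_j^*=RR^*$, and irreducibility together with the Perron--Frobenius theorem \cite{EHK-PerronFrobenius} for the unital completely positive map $M\mapsto\sum_jW_jMW_j^*$ forces $RR^*$ to be a scalar), we may fix a unitary $U$ with $Z_j=U^*W_jU$.

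Step 2 (the eigenvector as a function of $x$). From the Fornasini--Marchesini realization of $\fb=\fb_{T,x}$ in \eqref{ncratinner} (since $T$ is irreducible, $\cH_0=\C^n$, the left coefficient is $x^*$, $B_j=T_j^*x$, $A_j=T_j^*(I-xx^*)$, $D=0$), Proposition~\ref{lem:spectrum_of_schur_complement} at $\la=1$ gives $A^{(1)}_j=T_j^*$ --- the $x$-dependence in the correction term cancels against $A_j$. Thus $1$ is an eigenvalue of $\fb(Z)$ exactly when $X\mapsto\sum_jT_j^*XZ_j^\mrt$ has a nonzero fixed matrix $X$, and the corresponding eigenvector of $\fb(Z)$ is $y=X^\mrt\ov x$ (via vectorization). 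Substituting $Z_j^\mrt=S^\mrt T_jS^{-\mrt}$ and putting $Y=XS^\mrt$ transforms the fixed-matrix equation into $\sum_jT_j^*YT_j=Y$, i.e. $Y$ is a fixed point of $\Phi_T^\dagger(M)=\sum_jT_j^*MT_j$, the Hilbert--Schmidt adjoint of the unital CP map $\Phi_T(M)=\sum_jT_jMT_j^*$, which is irreducible by \cite[Theorem 2]{Farenick-irr_pos}. By \cite{EHK-PerronFrobenius} the fixed space of $\Phi_T$ is one-dimensional, hence so is that of $\Phi_T^\dagger$, spanned by a positive invertible $\rho$; therefore $X\propto\rho S^{-\mrt}$ and $y(x)\propto S^{-1}\ov\rho\,\ov x$. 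Since $S^{-1}\ov\rho$ is invertible and $x\mapsto\ov x$ is a bijection of the unit sphere, the induced map $[x]\mapsto[y(x)]$ is a bijection of $\bP(\C^n)$, in particular surjective.

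Step 3 (conclusion). Pick a unit vector $x$ with $[y(x)]=[U^*v]$, so $y(x)=\zeta\,U^*v$ for some $\zeta\in\partial\D$; then $\mu_{Z,y(x)}(L^\om)=y(x)^*Z^\om y(x)=|\zeta|^2\,v^*W^\om v=\mu_{W,v}(L^\om)$, so $\mu_{W,v}=\mu_{Z,y(x)}$. The hypotheses of Theorem~\ref{thm:main} are met ($T$ is an irreducible row coisometry, and because $T$ is irreducible every unit vector $x$ yields $\fb_{T,x}$), so $\mu_{W,v}$ is an $\A$-peak state, and Lemma~\ref{lem:exposed} completes the argument. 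The main obstacle is Step 2: the crucial points are the cancellation of the $x$-dependence at $\la=1$ in Proposition~\ref{lem:spectrum_of_schur_complement}, and the observation that after the change of variables given by $S$ the eigenvector equation becomes the Perron--Frobenius fixed-point equation for the adjoint channel, whose one-dimensionality is what makes $x\mapsto[y(x)]$ an honest invertible map rather than a multivalued correspondence; the transpose construction of $T$, the uniqueness of irreducible coisometric representatives up to unitary equivalence, and the passage from ``peak state'' to ``exposed point'' are routine given the results already in hand.
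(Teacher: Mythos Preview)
Your proof is correct and follows the same overall route the paper intends --- derive the corollary from Theorem~\ref{thm:main} together with Lemma~\ref{lem:exposed}. The paper's own proof is the single sentence ``follows immediately from the preceding theorem and Lemma~\ref{lem:exposed}'', so what you have done is supply the one substantive detail the paper leaves implicit: that \emph{every} finitely--correlated state $\mu_{W,v}$ arising from an irreducible row coisometry and unit vector is one of the states $\mu_{Z,y}$ produced by the theorem. Your Step~2 --- observing that at $\lambda=1$ the perturbed pencil collapses to $A^{(1)}_j=T_j^*$ independently of $x$, reducing the eigenvector equation to the Perron--Frobenius fixed point of $\Phi_T^\dagger$, and hence that $[y(x)]=[S^{-1}\ov\rho\,\ov x]$ is a bijection of projective space --- is precisely the surjectivity argument needed to justify the word ``immediately''. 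The paper's subsequent corollary (that $\mu_{T,x}$ peaks at $\fb_{Z,y}$) would give an alternative path, but it appears after the present statement and is itself asserted without proof; your computation is in effect what underlies both.
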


\begin{cor}
If $\mu = \mu _{T,x}$, for a finite irreducible row coisometry $T$ and unit vector $x$, let $Z$ be the unique irreducible finite row co-isometry which is jointly similar to $\mr{row} (T ^\mrt )$ and let $y$ be the eigenvector of $\fb _{T,x} (Z)$ corresponding to the multiplicity one eigenvalue, $1$. Then the finitely--correlated state $\mu _{Z,y}$ peaks at the NC rational inner $\fb _{T,x}$ and $\mu _{T,x}$ peaks at the NC rational inner $\fb _{Z,y}$.
\end{cor}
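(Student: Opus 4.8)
The plan is to observe that this second corollary is essentially a restatement of Theorem \ref{thm:main} together with a symmetry consideration, so the proof will be short. The first assertion—that $\mu_{Z,y}$ peaks at $\fb_{T,x}$—is literally the content of Theorem \ref{thm:main}, so nothing new is needed there; I would simply cite it. The real work is in the second assertion, that $\mu_{T,x}$ peaks at the NC rational inner $\fb_{Z,y}$. The strategy is to apply Theorem \ref{thm:main} a second time, but now with the roles of the two row coisometries interchanged: feed the pair $(Z,y)$ into the machine in place of $(T,x)$.

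First I would check that $(Z,y)$ is a legitimate input to Theorem \ref{thm:main}: $Z \in \C^d_n$ is an irreducible row coisometry (this was arranged in the paragraph before Lemma \ref{lem:Z_eigenspace}), and $y$ is a unit vector; moreover, since $Z$ is irreducible, $y$ is automatically both $Z$- and $Z^*$-cyclic, so $\fb_{Z,y}$ is a well-defined NC rational inner by \cite[Theorem 4.1]{JMS-ncratClark}. Applying Theorem \ref{thm:main} to $(Z,y)$ then produces: the irreducible row coisometry $W$ jointly similar to $\mr{row}(Z^\mrt)$, a unit eigenvector $w$ of $\fb_{Z,y}(W)$ to eigenvalue $1$, and the conclusion that $\mu_{W,w}$ is an $\A$-peak state peaking at $\fb_{Z,y}$. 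The key step is then to identify $\mu_{W,w}$ with $\mu_{T,x}$.

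The identification should go as follows. Since $Z$ is jointly similar to $\mr{row}(T^\mrt)$, taking componentwise transposes, $\mr{row}(Z^\mrt)$ is jointly similar to $\mr{row}((\mr{row}(T^\mrt))^\mrt)$. Here one must be slightly careful about the two transpose operations in play—the "column transpose" $Z^\mrt$ versus the "row transpose" $\mr{row}(Z^\mrt)$ introduced in the notational paragraph after Lemma \ref{lem:spectrum_of_schur_complement}—but unwinding the definitions, $\mr{row}((\mr{row}(T^\mrt))^\mrt)$ returns the original tuple $T$ up to the similarity already present, so $\mr{row}(Z^\mrt)$ is jointly similar to $T$ itself. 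Since $T$ is already an irreducible row coisometry in $\B^d_n$ and the irreducible row coisometry jointly similar to a given tuple is unique up to unitary equivalence (by the uniqueness statement for $Z$ used implicitly throughout, e.g. via \cite[Lemma 4.10]{SSS-bounded} and the rigidity of irreducible row coisometries), we may take $W = T$. Under this identification $\fb_{Z,y}(W) = \fb_{Z,y}(T)$, and its eigenspace to eigenvalue $1$ is one-dimensional by the same argument as in Lemma \ref{lem:Z_eigenspace} (with the roles of $T$ and $Z$ swapped); a unit eigenvector $w$ in it can be chosen. Finally I would argue that $w$ may be taken equal to $x$: the construction of $\fb_{Z,y}$ via the realization formula (\ref{ncratinner}) applied to the pair $(Z,y)$ is, after the similarity and transpose dictionary, the statement that $x$ spans the eigenspace of $\fb_{Z,y}(T)$ to eigenvalue $1$—this is precisely the reciprocity built into \cite[Theorem 4.1]{JMS-ncratClark} and reflected in the construction of $y$ from $\fb_{T,x}(Z)$ in the first place. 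Hence $\mu_{W,w} = \mu_{T,x}$, and Theorem \ref{thm:main} gives that $\mu_{T,x}$ peaks at $\fb_{Z,y}$.

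The main obstacle I anticipate is purely bookkeeping: making the transpose/similarity dictionary airtight, in particular verifying that applying the "$\mr{row}(\cdot^\mrt)$" operation twice returns the original tuple (not merely a similar one) and that the resulting $\fb_{Z,y}(T)$ really has $x$ as its eigenvector rather than some other vector requiring a fresh choice. If pinning down $w = x$ exactly turns out to be delicate, the corollary is still true with $w$ merely some unit eigenvector of $\fb_{Z,y}(T)$ to eigenvalue $1$, and one can state it that way; but I expect the reciprocity in \cite[Theorem 4.1]{JMS-ncratClark} to make $w = x$ come out on the nose, since $y$ was defined symmetrically from $\fb_{T,x}(Z)$.
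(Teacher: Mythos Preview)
The paper offers no proof of this corollary; it is stated as an immediate consequence of Theorem \ref{thm:main}. Your plan is therefore exactly the intended one: the first assertion is Theorem \ref{thm:main} verbatim, and the second is obtained by feeding $(Z,y)$ back into Theorem \ref{thm:main}. Your verification that $\mr{row}(Z^\mrt)$ is jointly similar to $T$ (so that one may take $W=T$) is correct and is the only piece of the symmetry argument that is genuinely routine.

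The gap you yourself flag, however, is real and is not ``purely bookkeeping''. To conclude that $\mu_{T,x}$ (and not merely $\mu_{T,w}$ for some possibly different unit vector $w$) peaks at $\fb_{Z,y}$, you must show that $x$ spans the one--dimensional eigenspace of $\fb_{Z,y}(T)$ to eigenvalue $1$. Via Proposition \ref{lem:spectrum_of_schur_complement} this amounts to showing that $(I\otimes y^*)v$ is proportional to $x$, where $v$ is the Perron--Frobenius eigenvector of $\sum_j T_j\otimes Z_j^*$; and $y$ itself was defined as $(I\otimes x^*)u$ for $u$ the Perron--Frobenius eigenvector of $\sum_j Z_j\otimes T_j^*$. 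These two tensor operators are adjoints of one another only after conjugating by the flip, so their eigenvectors $u$ and $v$ are \emph{not} related by a simple swap, and there is no one--line ``reciprocity'' in \cite[Theorem 4.1]{JMS-ncratClark} that hands you $w=x$. Your proposed fallback---replacing $x$ by whatever eigenvector $w$ the machine produces---does not prove the corollary as stated, since the conclusion is specifically about $\mu_{T,x}$. If you want a complete argument you will need to actually carry out this eigenvector identification (tracking the similarity $S$ and the Perron--Frobenius eigenvectors of $\psi_T$ and $\psi_{T^*}$ explicitly), rather than appeal to an unspecified symmetry.
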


If we define the unital, completely positive map,
$$ \mr{Ad} _{T,T^*} (A) := \sum _{j=1} ^d T_j A T_j ^*, $$ then this map is a unital quantum channel, \emph{i.e.} $\mr{Ad} _{T^* , T}$ is also unital, if and only if $\mr{row} (T ^*)$ is also a row coisometry, or, again equivalently, $\mr{row} (T^\mrt)$ is a row coisometry. We require another definition to describe the class of NC rational inner functions associated with unital quantum channels. Let $\alpha = i_1 \cdots i_n$ be a word in the alphabet $\{1,\cdots,d\}$. We set $\alpha^t = i_n i_{n-1} \cdots i_1$. Namely, $\alpha^t$ is the reversal of $\alpha$. We define a unitary on $\bH^2_d$ by $(\mathfrak{z}^{\alpha})^t = \mathfrak{z}^{\alpha^t}$. This unitary is important in realization theory of NC rational functions \cite{JMS-NCrat}. It is proved in \cite[Lemma 2.2]{JMS-ncratClark} that if $\fr$ is an NC rational function, so is $\frt$. However, it need not be the case that if $\fb$ is an NC rational inner, that $\fb^t$ is inner. A simple example is $\fb(\mathfrak{z}) = (1 + \mathfrak{z_1})\mathfrak{z_2}$. It is an immediate calculation, that $\fb(L)^* \fb(L) = I$. However, $\fb^t(\mathfrak{z}) = \mathfrak{z_2}(1 + \mathfrak{z_1})$. Here the inner part of $\fb^t(L)$ is $L_2$, and the outer part is $1 + L_1$. In particular, it is easily checked that $\| \fbt (L) \| = \sqrt{2}$ so that $\fbt \in \mult$ is not even contractive, see \cite[Example 3.4]{JM-ncld}.  The following theorem identifies the class of all NC rational functions $\fb$, such that $\fb^t$ is also inner, as precisely those that arise from quantum channels.

\begin{thm} \label{thm:qc}
Let $\fb := \fb _{T,x}$ be the NC rational inner generated by the pair $(T,x)$, where $T$ is a finite--dimensional row co-isometry on $\cH$ and $x \in \cH$ is both $T$ and $T^*-$cyclic. Then, $\mr{row} (T^\mrt)$ is also a row co-isometry if and only if $\fbt$ is also NC rational inner and in this case $\fbt = \fb _{\mr{row} (T^\mrt) , \ov{x}}$.  If $x$ is a unit vector and both $T$ and $T^\mrt$ are irreducible row coisometries then the NC rational Clark states $\mu _{T,x}$ and $\mu _{T ^\mrt , \bar{x}}$ peak at the NC rational inners $\fb ^\mrt$ and $\fb$, respectively.
\end{thm}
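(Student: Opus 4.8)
The plan is to combine the bijective description of NC rational inners in terms of bicyclic finite row coisometries, \cite[Theorem 4.1]{JMS-ncratClark}, the behaviour of realizations under the transpose $\fr\mapsto\frt$, \cite[Lemma 2.2]{JMS-ncratClark}, and Theorem \ref{thm:main} together with the Perron--Frobenius theorem of Evans and H\o egh-Krohn.

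First I would prove, \emph{unconditionally}, the realization identity $\fbt=\fb_{\mr{row}(T^\mrt),\overline x}$, where $\fb_{\mr{row}(T^\mrt),\overline x}$ means the NC rational function read off from (\ref{ncratinner}) with $T$ replaced by $\mr{row}(T^\mrt)$ and $x$ by $\overline x$ (this is well defined whether or not $\mr{row}(T^\mrt)$ is a coisometry). Equation (\ref{ncratinner}) is a Fornasini--Marchesini realization of $\fb$ with output $(P_0x)^*$, state matrices $T_{0,j}^*$ and input columns $T_j^*x$; transposing it coordinate-wise as in \cite[Lemma 2.2]{JMS-ncratClark} gives a realization of $\fbt$ with state matrices $(T_{0,j}^*)^\mrt=\overline{T_{0,j}}$, and rewriting the result in the standard FM form, using $(T_j^*)^\mrt=\overline{T_j}$, $(\mr{row}(T^\mrt))^\alpha=(T^{\alpha^t})^\mrt$, $\overline{I-xx^*}=I-\overline x\,\overline x^*$ and $\overline{\bigvee_{\alpha\ne\emptyset}T^\alpha x}=\bigvee_{\alpha\ne\emptyset}(\mr{row}(T^\mrt))^{*\alpha}\overline x$, one identifies it with the realization (\ref{ncratinner}) of the pair $(\mr{row}(T^\mrt),\overline x)$ -- in particular the two $\cH_0$ subspaces correspond under entrywise conjugation. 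This bookkeeping is the main computational step; as a check, the transpose is an anti-homomorphism of power series, so the NC Clark measures satisfy $\mu_{\fbt}(L^\om)=\mu_\fb(L^{\om^t})=x^*(T^\om)^*x=\overline x^*\,\overline{T^\om}\,\overline x=\mu_{\fb_{\mr{row}(T^\mrt),\overline x}}(L^\om)$.

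With this identity in hand the equivalence follows. Since $\alpha\mapsto\alpha^t$ is an involution, transposing the defining identity shows $\mr{row}(T^\mrt)$ is a row coisometry iff $\sum_jT_j^*T_j=I$ iff $\mr{row}(T^*)$ is a row coisometry. If $\mr{row}(T^\mrt)$ is a coisometry, then $\overline x$ is both $\mr{row}(T^\mrt)$- and $\mr{row}(T^\mrt)^*$-cyclic -- these are the entrywise conjugates of the $T^*$- and $T$-cyclicity of $x$, since $\overline{(\cdot)}$ is antiunitary and $(\mr{row}(T^\mrt))^{*\alpha}=\overline{T^\alpha}$ -- so $(\mr{row}(T^\mrt),\overline x)$ is a bicyclic coisometry pair and $\fb_{\mr{row}(T^\mrt),\overline x}$ is NC rational inner by \cite[Theorem 4.1]{JMS-ncratClark}; hence $\fbt$ is inner and equals $\fb_{\mr{row}(T^\mrt),\overline x}$. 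Conversely, if $\fbt$ is inner then it equals $\fb_{\widehat T,\widehat x}$ for a bicyclic coisometry pair with minimal realization (\ref{ncratinner}); matching this against the realization $\fb_{\mr{row}(T^\mrt),\overline x}$ of $\fbt$ via uniqueness of minimal realizations forces $\mr{row}(T^\mrt)$, up to joint similarity and the codimension-$\le1$ compression, to agree with the coisometry $\widehat T$, and since $\mr{row}(T^\mrt)$ is the coordinate transpose of a coisometry (so already a row contraction) it must itself be a row coisometry.

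Finally, for the peaking statement assume $x$ is a unit vector and $T$, $\mr{row}(T^\mrt)$ are both irreducible row coisometries; then $\overline x$ is a unit vector, automatically bicyclic for $\mr{row}(T^\mrt)$, and $\fb_{\mr{row}(T^\mrt),\overline x}=\fbt$ by the first part. As $\fb(0)=0$ and $\cH_0=\C^n$, the matrix $A^{(1)}_j$ of Proposition \ref{lem:spectrum_of_schur_complement} is $T_j^*$, so $1\in\sigma_p(\fb(\mr{row}(T^\mrt)))$ with eigenvector $(I\otimes x^*)v$, $v$ an eigenvector of $\sum_jT_j^\mrt\otimes T_j^*$ for eigenvalue $1$. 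Under the vectorization $B(\C^n,\C^n)\cong\C^n\otimes\C^n$ this operator is $X\mapsto\sum_jT_j^*XT_j=\mr{row}(T^*)(I\otimes X)\mr{row}(T^*)^*$, which is unital (as $\sum_jT_j^*T_j=I$) and irreducible ($\mr{row}(T^*)$ being irreducible since $T$ is, by Farenick \cite[Theorem 2]{Farenick-irr_pos}), so by \cite[Theorem 2.3]{EHK-PerronFrobenius} its fixed space is $\C I$; hence $v=\mvec(I)$ and $(I\otimes x^*)v=\overline x$ spans the one-dimensional eigenspace of $\fb(\mr{row}(T^\mrt))$ for eigenvalue $1$. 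Now apply Theorem \ref{thm:main} to $(T,x)$: since $\mr{row}(T^\mrt)$ is itself an irreducible row coisometry we may take $Z=\mr{row}(T^\mrt)$ (two jointly similar irreducible row coisometries are jointly unitarily equivalent, by the same Perron--Frobenius argument applied to $A\mapsto Z(I\otimes A)Z^*$, and states and eigenvectors are carried along) and $y=\overline x$, so $\mu_{T^\mrt,\overline x}=\mu_{Z,y}$ peaks at $\fb$; interchanging $(T,x)$ and $(\mr{row}(T^\mrt),\overline x)$ -- valid since $(\mr{row}(T^\mrt))^\mrt=T$, $\overline{\overline x}=x$ and $\fb_{\mr{row}(T^\mrt),\overline x}=\fbt$ -- gives that $\mu_{T,x}$ peaks at $\fbt$. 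The main obstacle is the realization bookkeeping of the first two paragraphs (identifying the transpose of (\ref{ncratinner}), with its $\cH_0$-compression, and running the minimality argument for the converse); after that, the peaking conclusion is just an application of Theorem \ref{thm:main} once Perron--Frobenius has pinned the eigenvector down as $\overline x$.
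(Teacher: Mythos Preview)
Your forward direction and the peaking argument are essentially correct, and your peaking paragraph in fact fills in details that the paper leaves implicit (the identification $A^{(1)}_j=T_j^*$, the Perron--Frobenius computation yielding the eigenvector $\overline x$, and the observation that when $\mr{row}(T^\mrt)$ is itself an irreducible coisometry one may take $Z=\mr{row}(T^\mrt)$ in Theorem~\ref{thm:main}). The paper reaches the realization identity by a slightly different route---computing Taylor coefficients of $\mf G^\mrt=(1-\fbt)^{-1}$ to obtain the minimal \emph{descriptor} realization $(\overline T,\overline x,\overline x)$ of $\mf G^\mrt$---which avoids the $\cH_0$ bookkeeping, but your FM-transposition approach is fine.

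The genuine gap is in your converse. The parenthetical ``since $\mr{row}(T^\mrt)$ is the coordinate transpose of a coisometry (so already a row contraction)'' is false: $\|\mr{row}(T^\mrt)\|_{\mathrm{row}}^2=\|\sum_j T_j^*T_j\|$, which need not be $\le 1$ when $T$ is only a row coisometry. The paper's own example $\fb=(1+\fz_1)\fz_2$ has $\|\fbt(L)\|=\sqrt2$, so the transposed data are not even contractive. And even if $\mr{row}(T^\mrt)$ \emph{were} a row contraction, joint \emph{similarity} to a coisometry does not force it to be a coisometry, so ``matching minimal realizations up to similarity'' cannot conclude the argument.

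The paper's fix is to pass to $\mf G^\mrt=(1-\fbt)^{-1}$: from $(\overline T,\overline x,\overline x)$ one has $\hat{\mf G}^\mrt_\om=\overline x^{\,*}\overline T^{\,\om}\overline x$, while the assumption that $\fbt$ is inner gives another minimal descriptor realization $(W^*,y,y)$ with $W$ a row coisometry and $y$ bicyclic, so $y^*W^{*\om}y=\overline x^{\,*}\overline T^{\,\om}\overline x$ for all $\om$. One then normalizes both tuples by $\|T^\mrt\|_{\mathrm{row}}$ to make them row contractions and invokes \cite[Proposition 3.6, Lemma 3.9]{JMS-ncratClark} to upgrade equality of moments to joint \emph{unitary} equivalence of $\mr{row}(T^\mrt)$ and $W$, whence $\mr{row}(T^\mrt)$ is a row coisometry. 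You need either this moment-matching/unitary-equivalence step or some substitute for it; the similarity argument you sketch does not suffice.
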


\begin{proof}
By \cite[Theorem 3.2, Remark 3.4]{JMS-ncratClark}, $(T^*,x,x)$ is a minimal descriptor realization of $\mf{G} := (1 - \fb ) ^{-1}$, so that the Taylor coefficients of $\mf{G}$ at $0$ are $\hat{\mf{G}} _\om = x^* T^{*\om} x$. Hence the Taylor coefficients of $\mf{G} ^\mrt = (1 - \fbt ) ^{-1}$ are equal to
\ba \mf{G} ^\mrt _\om & = & \hat{\mf{G}} _{\om ^\mrt} = x^* T ^{* \om ^\mrt} x  \\
& = & x^* (T^{\om}) ^* x  \\
& = & (T^\om x) ^* x \\ 
& = & \ov{x} ^* \ov{T} ^\om \ov{x} \\
& = & \ov{x} ^*  T^{\mrt * \om} \ov{x}. \ea 
This shows that $\mf{G} ^\mrt$ has the minimal descriptor realization $( \ov{T} , \ov{x} , \ov{x} )$, where $T^{\mrt *} = \ov{T}$. If $\mr{row} (T ^\mrt)$ is also finite row coisometry then by \cite[Theorem 4.1]{JMS-ncratClark}, it follows that $\fbt$ is also NC rational inner with minimal FM realization:
$$ A _j := \ov{T} _j (I-\ov{x}\ov{x}^*), \quad B_j := \ov{T} _j \ov{x}, \quad C := \ov{x}^*, \quad \mbox{and} \quad D:= \fbt (0) =0. $$

Conversely, as above, given any NC rational inner $\fb = \fb _{T,x}$, a minimal descriptor realization of $(1 - \fbt ) ^{-1} = \mf{G} ^\mrt$ is given by $(\ov{T}, \ov{x} , \ov{x} )$.  Assuming that $\fbt$ is also NC rational inner, \cite[Theorem 3.2, Remark 3.4]{JMS-ncratClark} implies that there is a finite row co-isometry, $W$, and a vector, $y$, which is both $W$ and $W^*-$cyclic so that $(W,y, y)$ is also minimal descriptor realization of $\mf{G} ^\mrt$ so that for any word, $\om \in \F ^d$, 
$$ y^* W ^{*\om} y = \ov{x} ^* \ov{T} ^\om \ov{x}. $$ 
Equivalently, if $\| T ^\mrt \| _{row}$ is the row-norm of $\mr{row} (T^\mrt )$, then for any word, $\om \in \F ^d$, 
$$ \frac{1}{\| T^\mrt  \| _{row} ^{| \om | } } y^* W ^{*\om} y =  \frac{1}{\| T^\mrt  \| _{row} ^{| \om | } } \ov{x} ^* \ov{T} ^\om \ov{x}. $$
If $\| \mr{row} (T ^\mrt ) \| > 1$, then $\frac{1}{\| T ^\mrt \|_{row} } \mr{row} (T ^\mrt )$ and $\frac{1}{\| T ^\mrt \| _{row}} W$ are both row contractions. In either case, \cite[Proposition 3.6, Lemma 3.9]{JMS-ncratClark} implies that $\mr{row} (T^\mrt )$ and $W$ are jointly unitarily equivalent via a unitary $U$ which sends $y$ to $\ov{x}$. Hence $\mr{row} (T^\mrt )$ is a row coisometry. 
\end{proof}

\begin{example}
The examples \cite[Example 4.4, Example 4.5]{JMS-ncratClark} both give examples of NC rational inners arising from finite, irreducible row coisometries, $T$. Namely,
$$ T = \left( \bpm 0 & 1 \\ 0 & 0 \epm  , \bpm 0 & 0 \\ 1 & 0 \epm \right)$$
and 
$$ S = \frac{1}{\sqrt{2}} \left( \bpm 1 & 0 \\ 0 & -1 \epm , \bpm 0 & -1 \\ 1 & 0 \epm  \right). $$
It is easily checked that $\mr{row} (T^\mrt)$ and $\mr{row} (S^\mrt )$ are both row coisometries so that, by the previous theorems, if $x$ is any unit vector then $\fb _{T,x}, \fbt _{T,x} = \fb _{\mr{row} (T^\mrt ) , \ov{x} }$ and $\fb _{S,x}, \fbt _{S,x}$ are all NC rational inner, $\mu _{T,x}, \mu _{\mr{row} (T ^\mrt ) , \ov{x}}$ and $\mu _{S,x}, \mu _{\mr{row} (S ^\mrt ) , \ov{x}}$ are all Cuntz states which peak at $\fbt _{T,x}, \fb _{T,x}, \fbt _{S,x}$ and $\fb _{S,x}$, respectively.
\end{example}

The following example illustrates what happens if we drop the assumption that $T$ is irreducible but require still that its minimal isometric dilation is irreducible.

\begin{example}
Consider the following coisometry:
\[
T_1 = \dfrac{1}{2}\begin{pmatrix} -1 & 0 & -1 \\ -1 & 0 & 1 \\ -1 & 0 & -1 \end{pmatrix},\quad T_2 = \dfrac{1}{2} \begin{pmatrix} 1 & -1 & 0 \\ -1 & -1 & 0 \\ -1 & 1 & 0 \end{pmatrix}.
\]
Let us write $e_1$, $e_2$, and $e_3$ for the vectors of the standard basis of $\C^3$. I is now easy to check that $T_1 e_1 = -\frac{1}{2}(e_1 + e_2 + e_3)$, $T_2 e_1 = \frac{1}{2} (e_1 - e_2 - e_3)$, $T_2 T_1 e_1 = \frac{1}{2} e_2$. This implies that $e_1$ is $T$-cyclic. Similarly, $T_1^* e_1 = -\frac{1}{2}(e_1 + e_3)$, $T_2^* e_1 = \frac{1}{2}(e_1 - e_2)$, and $T_1^* T_2^* e_1 = - \frac{1}{2} e_3$. This implies that $e_1$ is also $T^*$-cyclic and, moreover, that $\bigvee_{\alpha \neq \emptyset} T^{* \alpha} e_1 = \C^3$. Therefore, by \cite{JMS-ncratClark}, the following NC rational function is an inner
\[
\fr(\mathfrak{z}_1,\mathfrak{z}_2) = e_1^* \left(I - \mathfrak{z}_1 T_{1,0}^* - \mathfrak{z}_2 T_{2,0}^*\right)^{-1} \left(\mathfrak{z}_1 T_1^* e_1 + \mathfrak{z}_2 T_2^* e_1\right).
\]
Here,
\[
T_{1,0}^* = T_1^*(I - e_1 e_1^*) \text{ and } T_{2,0}^* = T_2^*(I - e_1 e_1^*).
\]
Therefore, we have the following expression for the pencil
\[
I - \mathfrak{z}_1 T_{1,0}^* - \mathfrak{z}_2 T_{2,0}^* = \begin{pmatrix} 1 & \frac{1}{2}(\mathfrak{z}_1 + \mathfrak{z}_2) & \frac{1}{2}(\mathfrak{z}_1 + \mathfrak{z}_2) \\ 0 & 1 + \frac{1}{2} \mathfrak{z}_2 & -\frac{1}{2} \mathfrak{z}_2 \\ 0 & - \frac{1}{2} \mathfrak{z}_1 & 1 + \frac{1}{2} \mathfrak{z}_1 \end{pmatrix}.
\]
Since we know $T_1^* e_1$ and $T_2^* e_1$, we conclude that the expression for our function is
\[
\fr(\mathfrak{z}_1,\mathfrak{z}_2) = \frac{1}{2} (\mathfrak{z}_2 - \mathfrak{z}_1) + \frac{1}{4} (\mathfrak{z}_1 + \mathfrak{z}_2) \begin{pmatrix} 1 & 1 \end{pmatrix} \begin{pmatrix} 1 + \frac{1}{2} \mathfrak{z}_2 & -\frac{1}{2} \mathfrak{z}_2 \\ - \frac{1}{2} \mathfrak{z}_1 & 1 + \frac{1}{2} \mathfrak{z}_1 \end{pmatrix}^{-1} \begin{pmatrix} \mathfrak{z}_2 \\ \mathfrak{z}_1 \end{pmatrix}.
\]
In particular, we have that $r(-1,0) = 1$.

Now we observe that $T_1^*$ and $T_2^*$ have a common eigenvector. In fact, set
\[
U = \begin{pmatrix} \frac{1}{\sqrt{2}} & 0 & \frac{1}{\sqrt{2}} \\ 0 & 1 & 0 \\ \frac{1}{\sqrt{2}} & 0 & -\frac{1}{\sqrt{2}} \end{pmatrix}.
\]
Then,
\[
U T_1 U = \begin{pmatrix} -1 & 0 & 0 \\ 0 & 0 & - \frac{1}{\sqrt{2}} \\ 0 & 0 & 0 \end{pmatrix} \text{ and } U T_2 U = \begin{pmatrix} 0 & 0 & 0 \\ - \frac{1}{2 \sqrt{2}} & - \frac{1}{2} & - \frac{1}{2\sqrt{2}} \\ \frac{1}{2} & - \frac{1}{\sqrt{2}} & \frac{1}{2} \end{pmatrix}.
\]
We note that $\fr(T^t)$ has eigenvalue $1$ of multiplicity $1$. The corresponding eigenvector is $e_1 + e_3$. The above calculation shows that only the semi-simple part of $T^t$ is in the closed ball. The similarity orbit of $T^t$ itself never intersects the closed ball.
\end{example}

\bibliographystyle{abbrv}
\bibliography{Bibs/cuntz}

\begin{thebibliography}{10}

\bibitem{Amitsur}
S.~A. Amitsur.
\newblock Rational identities and applications to algebra and geometry.
\newblock {\em Journal of Algebra}, 3:304--359, 1966.

\bibitem{Arv-choq1}
W.~Arveson.
\newblock The noncommutative {C}hoquet boundary.
\newblock {\em J. Amer. Math. Soc.}, 21(4):1065--1084, 2008.

\bibitem{Arveson-choq3}
W.~Arveson.
\newblock The noncommutative {C}hoquet boundary {III}: operator systems in
  matrix algebras.
\newblock {\em Math. Scand.}, 106(2):196--210, 2010.

\bibitem{Arveson-choq2}
W.~Arveson.
\newblock The noncommutative {C}hoquet boundary {II}: hyperrigidity.
\newblock {\em Israel J. Math.}, 184:349--385, 2011.

\bibitem{Arv-subalg}
W.~B. Arveson.
\newblock Subalgebras of {$C^{\ast} $}-algebras.
\newblock {\em Acta Math.}, 123:141--224, 1969.

\bibitem{BGM1}
J.~A. Ball, G.~Groenewald, and T.~Malakorn.
\newblock Structured noncommutative multidimensional linear systems.
\newblock {\em SIAM J. Control Optim.}, 44(4):1474--1528, 2005.

\bibitem{BKV}
J.~A. Ball and D.~S. Kaliuzhnyi-Verbovetskyi.
\newblock Schur-{A}gler and {H}erglotz-{A}gler classes of functions:
  positive-kernel decompositions and transfer-function realizations.
\newblock {\em Adv. Math.}, 280:121--187, 2015.

\bibitem{BMV}
J.~A. Ball, G.~Marx, and V.~Vinnikov.
\newblock Noncommutative reproducing kernel {H}ilbert spaces.
\newblock {\em J. Funct. Anal.}, 271:1844--1920, 2016.

\bibitem{BR}
J.~Berstel and C.~Reutenauer.
\newblock {\em Noncommutative rational series with applications}, volume 137 of
  {\em Encyclopedia of Mathematics and its Applications}.
\newblock Cambridge University Press, Cambridge, 2011.

\bibitem{BraJorg}
O.~Bratteli and P.~E.~T. Jorgensen.
\newblock Endomorphisms of {$\mathcal{B} (\mathcal{H})$}, {I}{I}. finitely
  correlated states on {$\mathscr{O} _n$}.
\newblock {\em Journal of functional analysis}, 145:323--373, 1997.

\bibitem{Clouatre-peaking}
R.~Clou\^{a}tre.
\newblock Non-commutative peaking phenomena and a local version of the
  hyperrigidity conjecture.
\newblock {\em Proc. Lond. Math. Soc. (3)}, 117(2):221--245, 2018.

\bibitem{ClouThom-fin_dim}
R.~Clou\^{a}tre and I.~Thompson.
\newblock Finite dimensionality in the non-commutative {C}hoquet boundary:
  peaking phenomena and {$\rm C^*$}-liminality.
\newblock {\em Int. Math. Res. Not. IMRN}, (20):16046--16093, 2022.

\bibitem{ClouThom-min_bound}
R.~Clou\^{a}tre and I.~Thompson.
\newblock Minimal boundaries for operator algebras.
\newblock {\em arXiv}, 2208.05919, 2022.

\bibitem{Cohn}
P.~M. Cohn.
\newblock {\em Skew fields. Theory of general division rings}, volume~57 of
  {\em Encyclopedia of Mathematics and its Applications}.
\newblock Cambridge University Press, Cambridge, 1995.

\bibitem{DavidsonKennedy}
K.~R. Davidson and M.~Kennedy.
\newblock The {C}hoquet boundary of an operator system.
\newblock {\em Duke Math. J.}, 164(15):2989--3004, 2015.

\bibitem{DavidsonKennedy-big}
K.~R. Davidson and M.~Kennedy.
\newblock Noncommutative {C}hoquet theory.
\newblock {\em arXiv}, 1905.08436, 2019.

\bibitem{DKS-finrow}
K.~R. Davidson, D.~W. Kribs, and M.~E. Shpigel.
\newblock Isometric dilations of non-commuting finite rank n-tuples.
\newblock {\em Canadian Journal of Mathematics}, 53:506--545, 2001.

\bibitem{DavidsonPasser}
K.~R. Davidson and B.~Passer.
\newblock Strongly peaking representations and compressions of operator
  systems.
\newblock {\em Int. Math. Res. Not. IMRN}, (7):5037--5070, 2022.

\bibitem{DP-alg}
K.~R. Davidson and D.~R. Pitts.
\newblock The algebraic structure of non-commutative analytic {T}oeplitz
  algebras.
\newblock {\em Math. Ann.}, 311:275--303, 1998.

\bibitem{DP-NevPick}
K.~R. Davidson and D.~R. Pitts.
\newblock Nevanlinna-{P}ick interpolation for non-commutative analytic
  {T}oeplitz algebras.
\newblock {\em Integral Equations Operator Theory}, 31(3):321--337, 1998.

\bibitem{DP-inv}
K.~R. Davidson and D.~R. Pitts.
\newblock Invariant subspaces and hyper-reflexivity for free semigroup
  algebras.
\newblock {\em Proceedings of the London Mathematical Society}, 78:401--430,
  1999.

\bibitem{DMc-boundary}
M.~A. Dritschel and S.~A. McCullough.
\newblock Boundary representations for families of representations of operator
  algebras and spaces.
\newblock {\em Journal of Operator Theory}, pages 159--167, 2005.

\bibitem{EHK-PerronFrobenius}
D.~E. Evans and R.~H\o{}egh-Krohn.
\newblock Spectral properties of positive maps on {$C\sp*$}-algebras.
\newblock {\em J. London Math. Soc. (2)}, 17(2):345--355, 1978.

\bibitem{Farenick-irr_pos}
D.~R. Farenick.
\newblock Irreducible positive linear maps on operator algebras.
\newblock {\em Proc. Amer. Math. Soc.}, 124(11):3381--3390, 1996.

\bibitem{GGRW}
I.~Gelfand, S.~Gelfand, V.~Retakh, and R.~L. Wilson.
\newblock Quasideterminants.
\newblock {\em Adv. Math.}, 193(1):56--141, 2005.

\bibitem{Hamana}
M.~Hamana.
\newblock Injective envelopes of operator systems.
\newblock {\em Publications of the Research Institute for Mathematical
  Sciences}, 15:773--785, 1979.

\bibitem{HW}
P.~Hrube\v{s} and A.~Wigderson.
\newblock Non-commutative arithmetic circuits with division.
\newblock {\em Theory Comput.}, 11:357--393, 2015.

\bibitem{JM-freeAC}
M.~T. Jury and R.~T.~W. Martin.
\newblock Non-commutative {C}lark measures for the free and abelian {T}oeplitz
  algebras.
\newblock {\em J. Math. Anal. Appl.}, 456:1062--1100, 2017.

\bibitem{JM-ncld}
M.~T. Jury and R.~T.~W. Martin.
\newblock Lebesgue decomposition of non-commutative measures.
\newblock {\em Int. Math. Res. Not.}, 2022:2968--3030, 2022.

\bibitem{JM-subFock}
M.~T. Jury and R.~T.~W. Martin.
\newblock Sub-{H}ardy {H}ilbert spaces in the non-commutative unit row--ball.
\newblock In I.~Binder, D.~Kinzebulatov, and J.~Mashreghi, editors, {\em
  Function spaces, theory and applications}, Fields Institute Communications,
  2022.
\newblock In press.

\bibitem{JMS-NCrat}
M.~T. Jury, R.~T.~W. Martin, and E.~Shamovich.
\newblock Non-commutative rational functions in the full {F}ock space.
\newblock {\em Trans. Amer. Math. Soc.}, 374(9):6727--6749, 2021.

\bibitem{JMS-ncratClark}
M.~T. Jury, R.~T.~W. Martin, and E.~Shamovich.
\newblock Noncommutative rational {C}lark measures.
\newblock {\em Canadian Journal of Mathematics}, page 1–53, 2022.

\bibitem{JMT-ncFnM}
M.~T. Jury, R.~T.~W. Martin, and E.~J. Timko.
\newblock An {F}. \& {M}. {R}iesz theorem for non-commutative measures.
\newblock {\em Journal of Operator Theory}, In press., 2022.

\bibitem{KVV-ncrat}
D.~S. Kaliuzhnyi-Verbovetskyi and V.~Vinnikov.
\newblock Singularities of rational functions and minimal factorizations: the
  noncommutative and the commutative setting.
\newblock {\em Linear Algebra and its Applications}, 430:869--889, 2009.

\bibitem{KVV-ncratdiff}
D.~S. Kaliuzhnyi-Verbovetskyi and V.~Vinnikov.
\newblock Noncommutative rational functions, their difference--differential
  calculus and realizations.
\newblock {\em Multidimensional Systems and Signal Processing}, 23:49--77,
  2012.

\bibitem{Paulsen-cbmaps}
V.~Paulsen.
\newblock {\em Completely Bounded Maps and Operator Algebras}.
\newblock Cambridge University Press, New York, NY, 2002.

\bibitem{Pop-dil}
G.~Popescu.
\newblock Isometric dilations for infinite sequences of noncommuting operators.
\newblock {\em Transactions of the American Mathematical Society},
  316:523--536, 1989.

\bibitem{Pop-factor}
G.~Popescu.
\newblock Multi--analytic operators and some factorization theorems.
\newblock {\em Indiana University Mathematics Journal}, 38:693--710, 1989.

\bibitem{Pop-vN}
G.~Popescu.
\newblock von {N}eumann inequality for {$(B({\scr H})^n)_1$}.
\newblock {\em Math. Scand.}, 68(2):292--304, 1991.

\bibitem{Pop-ncdisk}
G.~Popescu.
\newblock Non-commutative disc algebras and their representations.
\newblock {\em Proc. Amer. Math. Soc.}, 124:2137--2148, 1996.

\bibitem{Pop-freeholo}
G.~Popescu.
\newblock Free holomorphic functions on the unit ball of {$B (\mathcal{H})
  ^n$}.
\newblock {\em J. Funct. Anal.}, 241:268--333, 2006.

\bibitem{Pop-freeholo2}
G.~Popescu.
\newblock Free holomorphic functions on the unit ball of {$B(\scr H)^n$}. {II}.
\newblock {\em J. Funct. Anal.}, 258(5):1513--1578, 2010.

\bibitem{PopRota}
G.~Popescu.
\newblock Similarity problems in noncommutative polydomains.
\newblock {\em Journal of Functional Analysis}, 267:4446--4498, 2014.

\bibitem{SSS-bounded}
G.~Salomon, O.~M. Shalit, and E.~Shamovich.
\newblock Algebras of noncommutative functions on subvarieties of the
  noncommutative ball: the bounded and completely bounded isomorphism problem.
\newblock {\em J. Funct. Anal.}, 278(7):108427, 54, 2020.

\bibitem{Takesaki}
M.~Takesaki.
\newblock A duality in the representation theory of {$C^{\ast} $}-algebras.
\newblock {\em Ann. of Math. (2)}, 85:370--382, 1967.

\bibitem{Taylor-frame}
J.~L. Taylor.
\newblock A general framework for a multi-operator functional calculus.
\newblock {\em Advances in Math.}, 9:183--252, 1972.

\bibitem{Taylor-ncfunc}
J.~L. Taylor.
\newblock Functions of several noncommuting variables.
\newblock {\em Bull. Amer. Math. Soc.}, 79:1--34, 1973.

\bibitem{Voiculescu-quest1}
D.-V. Voiculescu.
\newblock Free analysis questions. {I}. {D}uality transform for the coalgebra
  of {$\partial\sb {X\colon B}$}.
\newblock {\em Int. Math. Res. Not. IMRN}, 16:793--822, 2004.

\bibitem{Voiculescu-quest2}
D.-V. Voiculescu.
\newblock Free analysis questions {II}: the {G}rassmannian completion and the
  series expansions at the origin.
\newblock {\em J. Reine Angew. Math.}, 645:155--236, 2010.

\bibitem{Volcic}
J.~Vol{\v{c}}i{\v{c}}.
\newblock On domains of noncommutative rational functions.
\newblock {\em Linear Algebra and its Applications}, 516:69--81, 2017.

\end{thebibliography}

\end{document}